\documentclass[a4paper]{article}

\usepackage[a4paper,top=3cm,bottom=2cm,left=3cm,right=3cm,marginparwidth=1.75cm]{geometry}

\usepackage{url}
\usepackage{amsmath, amsthm, amsfonts}
\usepackage{graphicx}
\usepackage[colorinlistoftodos]{todonotes}
\usepackage[colorlinks=true, allcolors=blue]{hyperref}
\usepackage{mathrsfs}

\newcommand{\call}[1]{\mathcal{#1}}
\newcommand{\double}[1]{\mathbb{#1}}
\newcommand{\C}{\double{C}}

\newcommand{\N}{\double{N}}
\newcommand{\R}{\double{R}}

\newcommand{\hol}{\call{O}}
\newcommand{\D}{\call{D}}
\newcommand{\mm}{\mathfrak{m}}

\newcommand{\what}[1]{\widehat{#1}}
\newcommand{\HHom}{\mathcal{H}om}

\DeclareMathOperator{\leftfun}{left}
\DeclareMathOperator{\Der}{Der}

\DeclareMathOperator{\rank}{rank}
\DeclareMathOperator{\Adj}{Adj}
\DeclareMathOperator{\tr}{tr}
\DeclareMathOperator{\Obj}{Obj}
\DeclareMathOperator{\Sing}{Sing}

\theoremstyle{plain}
\newtheorem{Th}{Theorem}[section]
\newtheorem{Prop}{Proposition}[section]
\newtheorem{Cor}{Corollary}[section]
\newtheorem{Lema}{Lemma}[section]
\newtheorem{Ex}{Example}[section]
\newtheorem{Conj}{Conjecture}[section]
\theoremstyle{definition}
\newtheorem{Def}{Definition}[section]
\newtheorem*{Rmk}{Remark}

\title{\textbf{On Euler-homogeneity for free divisors}}
\author{Abraham del Valle Rodríguez}
\date{}

\begin{document}
\maketitle

\begin{abstract}
In 2002, it was conjectured that a free divisor satisfying the so-called Logarithmic Comparison Theorem must be strongly Euler-homogeneous and it was proved for the two-dimensional case. Later, in 2006, it was shown that the conjecture is also true in dimension three, but, today, the answer for the general case remains unknown. In this paper, we use the decomposition of a singular derivation as the sum of a semisimple and a topologically nilpotent derivation that commute in order to deal with this problem. By showing that this decomposition preserves the property of being logarithmic, we are able to give alternative proofs for the low-dimensional known cases.

\end{abstract}

\vspace{0,3 cm}

\section{Introduction}

Let $X$ be a complex analytic manifold of dimension $n$ and $D \subset X$ be a divisor. Let $\Omega^\bullet_X(*D)$ be the complex of meromorphic differential forms with poles along $D$ and $\Omega^\bullet_X(\log D) \subset \Omega^\bullet_X(*D)$ the subcomplex of logarithmic differential forms. We say that the Logarithmic Comparison Theorem (LCT) holds for $D$ if the inclusion

$$ \Omega^\bullet_X(\log D) \hookrightarrow \Omega^\bullet_X(*D) $$

\noindent is a quasi-isomorphism.

\vspace{0,3 cm}

\noindent By Grothendieck's Comparison Theorem \cite[Theorem 2]{Groth}, if we write $U = X \setminus D$ and $j: U \hookrightarrow X$, then the morphism

$$ \Omega^\bullet_X(*D) \to Rj_* \C_U $$

\vspace{0,3 cm}

\noindent is a quasi-isomorphism. So, if LCT holds for $D$, then $\Omega^\bullet_X(\log D)$ computes the cohomology of its complement.  \\

\noindent We denote by $\Der_X(-\log D)$ the $\hol_X$-module of logarithmic derivations (or vector fields) along $D$, which is the dual of $\Omega^1(\log D)$. Locally, a derivation $\delta \in \Der_{X,x}$ belongs to $\Der_{X,x}(-\log D)$ if $\delta(f) \in \langle f \rangle$ for any reduced local equation $f$ of $D$ at $x$. In this case, we will say that $\delta$ is a logarithmic derivation for $f$.

\vspace{0,3 cm}

\begin{Def}
(See \cite{Saito}.)
$D$ is called a \emph{free} divisor if $\Der_X(-\log D)$ (or, equivalently, $\Omega^1_X(\log D)$) is a locally free $\hol_X$-module.
\end{Def}

\vspace{0,3 cm}

\noindent Let $x \in X$ and consider the stalk $\Der_{X,x}(-\log D)$. Given a set of derivations $S = \{\delta_1, \ldots, \delta_n\}$ in $\Der_{X,x}(-\log D)$, we call $A = (\delta_i(x_j))_{i,j}$ \emph{the Saito matrix with respect to $S$} (and to the coordinate system). Denoting by $\bar{\delta} = (\delta_1, \ldots, \delta_n)^t$ and $\bar{\partial} = (\partial_1, \ldots, \partial_n)^t$, we have that $\bar{\delta} = A \bar{\partial}$.

\vspace{0,3 cm}

\noindent If $D$ is free, then $\Der_{X,x}(-\log D)$ is free as an $\hol_{X,x}$-module for every $x \in X$. By Saito's criterion \cite[Theorem 1.8]{Saito}, the freeness of $\Der_{X,x}(-\log D)$ is equivalent to the existence of some $\delta_1, \ldots, \delta_n \in \Der_{X,x} (-\log D)$ such that if $f \in \hol_{X,x}$ is a reduced local equation around $x$, then there exists a unit $u \in \hol_{X,x}$ with $f = u \det(A)$, where $A$ is the Saito matrix with respect to $\delta_1, \ldots, \delta_n$. In this case, $\{\delta_1, \ldots, \delta_n\}$ is a basis of $\Der_{X,x}(-\log D)$ and we will also say that $f$ is \emph{free}. \\

\begin{Def}
\label{DefFEH}
A germ of holomorphic function $f \in \hol_{X,x}$ is called \emph{strongly Euler-homogeneous at $x$} if there exists a $\delta \in \mm_{X,x} \Der_{X,x}$ such that $\delta(f) = f$ (where $\mm_{X,x}$ denotes the maximal ideal of the local ring $\hol_{X,x}$). A divisor $D$ is called \emph{strongly Euler-homogeneous} if, for each $x \in D$, some (and hence, any) reduced local equation of $D$ at $x$ is strongly Euler-homogeneous.
\end{Def}

\vspace{0,3 cm}

\noindent In \cite{2002}, the authors proposed the following conjecture and proved it in the case $n = 2$: \\

\begin{Conj}
\label{ConjLCT}
If $D$ is a free divisor in a complex analytic manifold $X$ of dimension $n$ that satisfies the Logarithmic Comparison Theorem, then it is strongly Euler-homogeneous.
\end{Conj}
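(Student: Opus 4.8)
The plan is to first recast both notions in Definition \ref{DefFEH} algebraically: writing a candidate Euler field as $\delta = \sum_i a_i \partial_i$, the condition $\delta(f) = f$ reads $f = \sum_i a_i\,\partial_i f$, so $f$ is Euler-homogeneous at $x$ exactly when $f \in J_f := (\partial_1 f,\dots,\partial_n f)$, while strong Euler-homogeneity is the sharper membership $f \in \mm_{X,x} J_f$ (the coefficients $a_i$ may be taken in $\mm_{X,x}$, i.e. $\delta$ is \emph{singular}). The main tool I would develop is a decomposition lemma: if $\delta \in \Der_{X,x}(-\log D)$ is singular (that is, $\delta \in \mm_{X,x}\Der_{X,x}$), then in the analytic Jordan--Chevalley decomposition $\delta = \delta_s + \delta_n$, with $[\delta_s,\delta_n]=0$, $\delta_s$ semisimple and $\delta_n$ topologically nilpotent, both summands again lie in $\Der_{X,x}(-\log D)$. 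I would prove this by lifting the Jordan decomposition of the linear part of $\delta$ at $x$ to the Lie algebra of singular vector fields and then verifying that the ideal $\langle f\rangle$ stays stable under $\delta_s$ and $\delta_n$; the delicate point is convergence, which is precisely why one works with the \emph{topologically} nilpotent part and records $\delta_s$ and $\delta_n$ as limits of universal polynomials in $\delta$ acting on the $\mm_{X,x}$-adic filtration.

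Granting the lemma, the key extraction step runs as follows. Suppose we are given a singular logarithmic $E$ with $E(f)=f$. Regard $E = E_s + E_n$ as commuting operators on the completion $\what{\hol}_{X,x}$, with $E_s$ diagonal on the monomials of a linearizing coordinate system and $E_n$ topologically nilpotent, and decompose $f = \sum_c f_c$ into $E_s$-eigencomponents. Comparing eigencomponents in $E(f)=f$ yields $E_n f_c = (1-c) f_c$ on each eigenspace; since a topologically nilpotent operator has no nonzero eigenvalue, $f_c = 0$ for $c \neq 1$ and $E_n f_1 = 0$. Hence $E_s(f) = f$ and $E_n(f) = 0$, so by the decomposition lemma $E_s$ is a \emph{semisimple} singular logarithmic Euler field. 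This already witnesses strong Euler-homogeneity, and when its eigenvalues are positive rationals it exhibits $f$ as quasi-homogeneous; this is where the decomposition does the real work, converting an arbitrary singular Euler field into a diagonal one.

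It remains to produce \emph{some} singular Euler field, and here I would induct on $n = \dim X$. If $D$ carries a logarithmic Euler field $E$ that is non-singular at $x$ (so $E(x)\neq 0$), the flow-box theorem provides coordinates in which $E = \partial_{y_1}$ and $f = e^{y_1} g(y_2,\dots,y_n)$, exhibiting $D$ locally as a product $D' \times \C$ with $D' = \{g=0\}$. Since both the Logarithmic Comparison Theorem and strong Euler-homogeneity are compatible with taking a product with a smooth factor, $D'$ satisfies LCT in dimension $n-1$, and the inductive hypothesis applied to $D'$ furnishes a singular Euler field for $g$, hence for $f$. If instead every logarithmic Euler field is singular, the previous paragraph already concludes. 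Thus the whole conjecture reduces to the single implication that LCT forces $D$ to be Euler-homogeneous, i.e. $f \in J_f$.

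The main obstacle is exactly this last implication. In low dimension it is within reach: for $n=2$ the result of \cite{2002} identifies LCT with local quasi-homogeneity, from which Euler-homogeneity is immediate, and for $n=3$ the known structure of free divisors together with the $\D$-module criteria for LCT yields a logarithmic Euler field; feeding either input into the scheme above produces the alternative proofs of the known cases. In arbitrary dimension, however, deriving $f\in J_f$ from LCT is precisely the step for which no general argument is available, and this is where the strategy stops short of the full statement of Conjecture \ref{ConjLCT}. The role of the decomposition lemma is thus to show that, once Euler-homogeneity is secured, the passage to \emph{strong} Euler-homogeneity (and even to local quasi-homogeneity) is automatic.
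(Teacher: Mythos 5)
You do not actually prove the statement, and you say so yourself: everything is funneled into the single implication ``LCT forces $f \in J_f$,'' which you then leave open. That reduction is correct but nearly contentless. Outside the product case (which is Lemma \ref{LemProd}), every logarithmic derivation is already singular, so the moment an Euler field exists it is automatically a \emph{strong} Euler field; the semisimple/nilpotent extraction you build for this step is not needed. To be fair, the paper does not prove the general conjecture either --- it is a conjecture --- but it does give genuine, self-contained proofs of the cases $n=2$ and $n=3$, and this is where your proposal has a real gap: you dispose of those cases by citing \cite{2002} and \cite{GS}, i.e.\ by invoking exactly the theorems to be (re)proved, which is circular. What the paper actually does, and what is entirely absent from your plan, is to extract from LCT a concrete, derivation-level obstruction: by the duality argument of Theorem \ref{Tr0} (if all basis derivations had zero trace, then $\D/\D\langle x_1,\ldots,x_n\rangle$, which has $\hol$-torsion, would embed into $\hol(*D)$, which is torsion-free), LCT forces some logarithmic derivation to have non-zero trace; conversely, failure of strong Euler-homogeneity forces every singular logarithmic derivation to be nilpotent, hence of trace zero, in dimension $2$ (Proposition \ref{PropN1}), and of trace zero after a case analysis involving Koszul-freeness in dimension $3$. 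Without something playing the role of this trace obstruction, LCT never actually enters your argument at any point.

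There is also a technical flaw in your main lemma as you propose to prove it: the ``analytic Jordan--Chevalley decomposition'' with convergence controlled by ``limits of universal polynomials in $\delta$'' is not available. The G\'erard--Levelt decomposition is formal; for a convergent singular derivation the parts $\delta_S$ and $\delta_N$ are in general only elements of $\mm\what{\Der}$, since the linearizing coordinate change may diverge. This is precisely why the paper states Proposition \ref{PropSNLog} and Corollary \ref{CorBasis} for $\what{\Der}_f$ and then transfers conclusions back to the convergent setting by faithful flatness and by Propositions \ref{PropConvFormalRed} and \ref{PropFormProd} (reducedness, the product property, and the trace all descend). Your eigencomponent extraction is correct as formal algebra --- it is close in spirit to Lemma \ref{DiagDer} and Corollary \ref{CorLogN} --- but, as noted above, it yields nothing beyond what the definition of strong Euler-homogeneity already gives once a singular Euler field is in hand.
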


\vspace{0,3 cm}

\noindent Later, in 2006, M. Granger and M. Schulze proved in \cite[Theorem 1.6]{GS} that Conjecture \ref{ConjLCT} also holds for $n=3$. They did it by proving the so-called \emph{formal structure theorem}, by which they were able to find a basis of logarithmic derivations formed by diagonal and nilpotent derivations in a suitable system of formal coordinates that satisfy additional properties. \\

\noindent The main objective of this article is to give a new proof of the conjecture for these cases with the aim of better understanding the conjecture. We expect that this new perspective could provide us some hints to approach the conjecture in higher dimension. \\

\noindent Throughout this paper, we will deal with both formal and convergent objects. Some definitions for a convergent series can be extended in a natural way to formal ones, so Section \ref{SecPrelim} is destinated to present those definitions and see the relations between them.   \\

\noindent In Section \ref{SecSN} we give the definitions of semisimple and topologically nilpotent derivations, whose importance will become clear later on. These concepts, introduced by R. Gérard and A. Levelt in \cite{GL} generalize, and make intrinsic the ones given by K. Saito in \cite{Saito71}. They allow them to prove an analogue to the Jordan decomposition theorem, which states that a singular formal derivation can be decomposed in a unique way as the sum of two commuting derivations, where one of them is semisimple and the other is topologically nilpotent. Our main result in this section is that if a singular derivation is logarithmic, then so do its semisimple and topologically nilpotent parts, which allows us to give a basis of the logarithmic derivations in which each element has one of these properties.\\

\noindent In Section \ref{SecTr0} we show that if LCT holds for a free divisor $D$, then at least one of the logarithmic derivations must have non-zero trace. This was first noticed in \cite{2002}, but here we give an alternative and more conceptual proof based on $\D$-module theory. \\

\noindent Finally, in Section \ref{SecLCTEH}, we use the central result in Section \ref{SecSN} to show that every non-Euler derivation of a certain type of plane curve must be topologically nilpotent. As a corollary, using the result in Section \ref{SecTr0}, we get an alternative proof of the conjecture in dimension $n=2$. We also give an example that shows that this is no longer true in higher dimension. However, these results allow us, by reproducing part of the arguments given in \cite{GS}, to give another proof for the case $n=3$. \\

\noindent While proving the conjecture for $n=2$, the authors state in \cite{2002} that, for a certain kind of plane curves, there always exists a basis of the logarithmic derivations such that one of them has no linear part. Although this happens to be true, it seems to be not so immediate as they claim, so in Appendix \ref{Sec2002} we clarify the proof of this statement. We prove that there are only two possibilities and later, after some work, we are able to discard one of them, reaching the desired conclusion. \\

\noindent The author would like to thank his PhD advisors Luis Narváez Macarro and Alberto Castaño Domínguez for their support and guidance while writing this paper. He is supported by a Junta de Andalucía PIF fellowship num. PREDOC\_00485 and also partially supported by PID2020-114613GB-I00.

\section{Preliminaries}
\label{SecPrelim}

\noindent As we will see in Section \ref{SecSN}, the Jordan decomposition of a derivation requires to consider formal objects instead of just convergent ones. That is why we need to give some definitions for both convergent and formal series. We are interested in how these properties behave when a convergent series is seen as a formal one. \\

\noindent Let us introduce some relevant notation. Consider the local ring of convergent power series $\hol := \hol_{\C^n,0} = \C\{x_1, \ldots, x_n\}$ with $\mm = \langle x_1, \ldots, x_n \rangle$ its maximal ideal and its completion (with respect to the $\mm$-adic topology) $\what{\hol} = \C[[x_1, \ldots, x_n]]$, which is the ring of formal power series. \\

\noindent When working with free divisors, we will always consider reduced equations (i.e. with no repeated factors) in $\hol$. This property is preserved when they are seen as elements of $\what{\hol}$. \\

\begin{Prop}
\label{PropConvFormalRed}
    Let $f \in \hol$ be a reduced convergent power series. Then, it is also reduced as a formal power series.
\end{Prop}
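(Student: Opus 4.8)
The plan is to recast ``reduced'' as a numerical condition on a Jacobian ideal and then transport that condition across the completion map $\hol \hookrightarrow \what{\hol}$. First I would recall that both $\hol = \C\{x_1,\ldots,x_n\}$ and $\what{\hol} = \C[[x_1,\ldots,x_n]]$ are regular local rings, hence unique factorization domains, and that the partial derivatives $\partial_1,\ldots,\partial_n$ act compatibly on both (formal differentiation of a convergent series is the completion of its analytic differentiation). The key elementary lemma I would establish is a Jacobian criterion valid in any regular local UFD $R$ of dimension $n$ over a field of characteristic zero: writing $f = \prod_i f_i^{a_i}$ as a product of pairwise non-associate irreducibles, a prime $f_i$ divides $\gcd(f, \partial_1 f, \ldots, \partial_n f)$ if and only if $a_i \geq 2$. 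Consequently $f$ is reduced in $R$ if and only if the ideal $I = (f, \partial_1 f, \ldots, \partial_n f)$ has height at least $2$, equivalently $\dim R/I \leq n-2$.

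For the lemma itself, the forward direction is Leibniz: if $a_i \geq 2$ then $f_i$ divides $f$ and every $\partial_j f$. For the converse, when $a_i = 1$ I would write $f = f_i g$ with $f_i \nmid g$, so that $\partial_j f = (\partial_j f_i)\, g + f_i\, \partial_j g$, whence $f_i \mid \partial_j f$ would force $f_i \mid \partial_j f_i$. Setting $d = \mathrm{ord}(f_i) \geq 1$ and invoking Euler's identity $\sum_j x_j \,\partial_j(\text{lowest form}) = d\cdot(\text{lowest form}) \neq 0$ in characteristic zero, some $\partial_j f_i$ has order $d-1 < d$ and hence cannot be divisible by $f_i$; thus $f_i \nmid \gcd$. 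Since in a UFD the gcd of a family is a unit exactly when no height-one (principal) prime contains all of them, this gives the codimension reformulation via $\dim R/I = n - \mathrm{ht}(I)$, which holds as $R$ is regular and therefore catenary.

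With the criterion in hand the transfer is immediate. Applying it in $R = \hol$, where $f$ is reduced by hypothesis, yields $\dim \hol/I \leq n-2$. The completion map $\hol \to \what{\hol}$ is flat and satisfies $\what{\hol}/I\what{\hol} = \what{\hol/I}$, and a Noetherian local ring and its completion share the same Krull dimension, so $\dim \what{\hol}/I\what{\hol} = \dim \hol/I \leq n-2$. Because the formal partials of $f$ generate precisely $I\what{\hol}$ together with $f$, applying the same criterion in $R = \what{\hol}$ (again a regular local UFD of dimension $n$ over $\C$) shows that $f$ is reduced as a formal power series.

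I expect the main obstacle to be conceptual rather than computational: reducedness of a ring does \emph{not} in general survive completion (that is guaranteed only for excellent rings), so the whole strategy is to repackage ``reduced'' as the purely numerical condition $\dim \hol/I \leq n-2$, which \emph{does} transfer because completion preserves the dimension of quotient rings. The one step needing genuine care is the order estimate in the Jacobian lemma that forbids an irreducible factor from dividing all of its own partial derivatives, but this is a short argument specific to characteristic zero.
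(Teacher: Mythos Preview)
Your argument is correct and takes a genuinely different route from the paper's. The paper invokes the theory of excellent rings: $\hol$ is excellent, hence Nagata, hence so is $\hol/\langle f\rangle$; and the completion of a reduced local Nagata ring is again reduced. That is short but leans on fairly heavy commutative algebra (Matsumura, Bourbaki). Your approach is more elementary and self-contained: you convert ``$f$ is reduced'' into the numerical condition $\dim R/(f,\partial_1 f,\ldots,\partial_n f)\leq n-2$ via a Jacobian-type lemma in characteristic zero, and then use only that completion of a Noetherian local ring preserves Krull dimension. Each step is sound: the order argument showing an irreducible $f_i$ cannot divide all its own partials is exactly the right point of contact with characteristic zero, and the identity $\dim R/I = n-\mathrm{ht}(I)$ is legitimate because regular local rings are catenary domains. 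The trade-off is scope: the paper's excellent-ring argument would apply verbatim to any reduced quotient of $\hol$, whereas your criterion is tailored to a single hypersurface in characteristic zero. For the purposes of this paper, where only hypersurfaces over $\C$ are needed, your proof is a clean and more transparent alternative.
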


\begin{proof}
    Since $f$ is reduced, the local ring $\hol/\langle f \rangle$ is reduced. The ring $\hol$ is excellent \cite[Theorem 102]{MatsumuraRosa}, so it is a Nagata ring \cite[Theorem 78]{MatsumuraRosa} and so does $\hol/\langle f \rangle$, since this property is preserved under quotients. But the completion of a local reduced Nagata ring is also reduced \cite[AC IX 4.4. Corollaire 1]{Bourbaki89}, so $\what{\hol}/\langle f \rangle$ is reduced, meaning $f$ is reduced as a formal power series. 
    
\end{proof}

\noindent Let us denote by $\Der$ the $\hol$-module of $\C$-derivations of $\hol$, which is free with a basis given by the partial derivatives $\{\partial_1, \ldots, \partial_n\}$, and $\what{\Der}$ its $\mm$-adic completion, which coincides with the $\what{\hol}$-module of $\C$-derivations of $\what{\hol}$ and is free (as an $\what{\hol}$-module) with the same basis. \\

\noindent Every derivation $\delta \in \Der$ can be uniquely decomposed as a sum $\delta = \sum_{i=-1}^\infty \delta_i$, where $\delta_i = \sum_{j=1}^n a_{ij} \partial_j$ with $a_{ij}$ being homogeneous of degree $i+1$ for all $j=1, \ldots, n$. Moreover, $\delta_0$, which is called the \emph{linear part of $\delta$}, can be written as $\underline{x} A \overline{\partial}$, where $A$ is a constant matrix and $\underline{x} = (x_1, \ldots, x_n)$. The same is true for a formal derivation $\delta \in \what{\Der}$. We say $\delta \in \Der$ (resp. $\delta \in \what{\Der}$) is \emph{singular} if $\delta_{-1} = 0$ or, equivalently, $\delta \in \mm \Der$ (resp. $\delta \in \mm \what{\Der} = \what{\mm} \what{\Der}$). \\

\noindent For a convergent power series $f \in \hol$ (or a formal power series $f \in \what{\hol}$), we will denote by $\Der_f$ the $\hol$-module ($\what{\hol}$-module) of convergent (formal) logarithmic derivations. Its $\mm$-adic completion will be denoted by $\what{\Der}_f$ and will consist of those derivations $\delta \in \what{\Der}$ such that $\delta(f) \in \what{\hol} f$. If $f \in \hol$, we can consider it as a formal power series and the formal $\Der_f$ (which is an $\what{\hol}$-module) is precisely the $\mm$-adic completion of the $\hol$-module $\Der_f$. \\

\noindent As in Definition \ref{DefFEH}, we will say that a formal power series $f \in \what{\hol}$ is \emph{strongly Euler-homogeneous at $0$} if there exists a formal (singular) derivation $\delta \in \mm \what{\Der}$ such that $\delta(f)=f$. Let us see the relation with the convergent definition. \\

\begin{Prop}
    Let $f \in \hol$. Then, $f$ is strongly Euler-homogeneous at $0$ as a convergent power series if and only if so is it as a formal power series.
\end{Prop}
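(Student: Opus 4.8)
The plan is to reformulate strong Euler-homogeneity as an ideal-membership condition and then transfer it between $\hol$ and $\what{\hol}$ using the faithful flatness of the completion.

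One implication is immediate. If $f$ is strongly Euler-homogeneous at $0$ as a convergent series, there is a singular convergent derivation $\delta \in \mm\Der$ with $\delta(f) = f$. Viewing $\delta \in \mm\Der \subseteq \what{\mm}\,\what{\Der}$ as a formal derivation, the identity $\delta(f) = f$ persists, so $f$ is strongly Euler-homogeneous as a formal series.

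For the converse, the key step is to rephrase the condition linearly. Writing $\delta = \sum_{j=1}^n a_j \partial_j$, the requirement $\delta \in \mm\Der$ amounts to $a_j \in \mm$ for all $j$, and $\delta(f) = f$ reads $\sum_{j=1}^n a_j \partial_j f = f$. Letting $J = \langle \partial_1 f, \ldots, \partial_n f \rangle \subseteq \hol$, I claim that the image of the $\hol$-linear map $\mm^n \to \hol$, $(a_1, \ldots, a_n) \mapsto \sum_j a_j \partial_j f$, is exactly $\mm J$: any $\sum_j a_j \partial_j f$ with $a_j \in \mm$ lies in $\mm J$, and conversely every element of $\mm J$ can be rewritten as such a sum after collecting coefficients of each $\partial_j f$. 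Hence $f$ is strongly Euler-homogeneous as a convergent series if and only if $f \in \mm J$, and, reasoning identically over $\what{\hol}$, it is strongly Euler-homogeneous as a formal series if and only if $f \in \what{\mm}\,(J\what{\hol}) = (\mm J)\what{\hol}$.

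It then remains to descend the membership $f \in (\mm J)\what{\hol}$ to $f \in \mm J$, which is where the main (and essentially only) difficulty lies. This is resolved by the faithful flatness of $\hol \to \what{\hol}$, a standard property of the completion of a Noetherian local ring, by which $I\what{\hol} \cap \hol = I$ for every ideal $I$. Applying this with the finitely generated ideal $I = \mm J$ and using $f \in \hol$, we obtain $f \in (\mm J)\what{\hol} \cap \hol = \mm J$, producing a convergent singular derivation $\delta$ with $\delta(f) = f$. I note that the linearity of the equation in the coefficients of $\delta$ is precisely what lets us bypass the full strength of Artin approximation: faithful flatness of the completion already suffices.
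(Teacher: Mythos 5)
Your proof is correct and matches the paper's argument essentially step for step: both reformulate strong Euler-homogeneity as the ideal membership $f \in \mm\langle \partial_1 f, \ldots, \partial_n f\rangle$ (resp.\ its formal counterpart) and then descend from $\what{\hol}$ to $\hol$ via faithful flatness of the completion, i.e.\ $I\what{\hol} \cap \hol = I$. Your write-up merely makes explicit the verification that the image of $(a_1,\ldots,a_n) \mapsto \sum_j a_j \partial_j f$ on $\mm^n$ equals $\mm J$, which the paper leaves implicit.
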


\begin{proof}
    The direct implication is clear, since $\mm \Der \subset \mm \what{\Der}$ and $\hol \subset \what{\hol}$. For the other one, let us note that if $f$ is strongly Euler-homogeneous at $0$ as a formal power series, then $f \in \what{\mm} \langle \partial_i f, i = 1, \ldots, n \rangle$. But, by faithful flatness, $\hol \cap \what{\mm} \langle \partial_i f, i = 1, \ldots, n \rangle = \mm \langle \partial_i f, i = 1, \ldots, n \rangle$, so $f \in \mm \langle \partial_i f, i = 1, \ldots, n \rangle$. This means that there exists $\delta \in \mm \Der$ such that $\delta(f)=f$ and $f$ is strongly Euler-homogeneous at $0$ as a convergent power series.
    
\end{proof}

\vspace{0,3 cm}

\noindent Another important concept that will be used throughout the paper is that of \emph{product}. First, we give the geometric definition (which justifies its name) and then we establish its algebraic translation. \\

\begin{Def}
    Let $(D,x)$ be a germ of a divisor in a complex analytic manifold $X$ of dimension $n$. We will say that $(D,x)$ is a \emph{product} if there exists some germ of divisor $(D', 0) \subseteq (\C^{n-1},0)$ such that $(D,x)$ is biholomorphic to $(D',0) \times (\C,0)$. If $D = V(f)$ with $f \in \hol_{X,x}$ is a product, we will also say that \emph{$f$ is a product (at $x$)}.
\end{Def}

\vspace{0,3 cm}

\noindent The following equivalence is proved in a more general setting in \cite[Lemma 3.5]{Saito}: \\

\begin{Lema}
Let $f$ be a local equation of a divisor $D \subset X$ at a point $x \in D$. Then, $f$ is a product at $x$ if and only if it admits a non-singular logarithmic derivation, that is, if $\Der_{X,x}(-\log D) $ is not contained in $\mm_x \Der_{X,x}$. 
\end{Lema}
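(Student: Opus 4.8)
The plan is to prove both directions of the equivalence, with the substance lying in producing coordinates that exhibit $f$ as a product from a non-singular logarithmic derivation.

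For the easy direction, suppose $f$ is a product at $x$. Then there is a biholomorphism identifying $(D,x)$ with $(D',0)\times(\C,0)$, and in suitable coordinates $(x_1,\ldots,x_{n-1},t)$ the equation $f$ depends only on $x_1,\ldots,x_{n-1}$ and not on $t$. In these coordinates the derivation $\partial_t$ is logarithmic, since $\partial_t(f)=0\in\langle f\rangle$, and it is visibly non-singular (its linear, in fact constant, part is non-zero). Hence $\Der_{X,x}(-\log D)\not\subseteq \mm_x\Der_{X,x}$.

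The harder direction is the converse. Assume there exists $\delta\in\Der_{X,x}(-\log D)$ with $\delta\notin\mm_x\Der_{X,x}$, i.e. $\delta$ has a non-zero constant part $\delta_{-1}\neq 0$. The plan is to use $\delta$ to straighten out a coordinate along which $f$ is constant. First I would apply the holomorphic flow-box (straightening) theorem: since $\delta$ is a germ of a vector field that does not vanish at $x$, there is a biholomorphic change of coordinates centered at $x$ in which $\delta=\partial_t$, where $t$ is one of the new coordinates $(y_1,\ldots,y_{n-1},t)$. The logarithmic condition $\delta(f)\in\langle f\rangle$ becomes $\partial_t f = g\cdot f$ for some $g\in\hol_{X,x}$. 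The goal is then to show that, possibly after a further coordinate change not involving differentiating out $t$, one can arrange $f$ to be independent of $t$.

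The main obstacle is exactly passing from $\partial_t f = gf$ to an actual product decomposition, since a priori $g\neq 0$. Here I would solve the linear first-order ODE $\partial_t f = g f$ in the $t$-variable: writing $G$ for a primitive of $g$ with respect to $t$ (which exists holomorphically), the function $\tilde f := e^{-G} f$ satisfies $\partial_t \tilde f = 0$, so $\tilde f$ is independent of $t$ and depends only on $(y_1,\ldots,y_{n-1})$. Since $e^{-G}$ is a unit, $\tilde f$ is a reduced local equation of $D$ in the new coordinates, and it defines a divisor of the form $(D'\times\C)$; thus $f$ is a product. The point requiring care is checking that $\tilde f$ genuinely lies in $\hol_{X,x}$ in the remaining variables (that the primitive $G$ is holomorphic in all variables and that $\tilde f$ is a well-defined germ), together with verifying that multiplying by the unit $e^{-G}$ does not disturb reducedness; both follow from standard properties of holomorphic functions and the fact that units preserve the defining ideal. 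Alternatively, one may simply invoke \cite[Lemma 3.5]{Saito} as stated, but the above is the self-contained argument I would write out.
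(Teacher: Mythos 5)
Your proposal is correct: the flow-box straightening of the non-vanishing logarithmic vector field, followed by integrating $\partial_t f = gf$ via the unit $e^{-G}$ to obtain an equation independent of $t$, is a complete and valid argument, and the converse direction is handled properly as well. Note that the paper itself gives no proof of this lemma but simply cites Saito's Lemma 3.5, and your self-contained argument is essentially the standard proof underlying that citation (Saito's lemma is stated in the more general setting of the logarithmic stratification, but the mechanism --- straightening a non-vanishing logarithmic field and solving the resulting linear ODE to trivialize the equation in that direction --- is the same), so you have in effect reconstructed the cited proof rather than found a different route.
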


\vspace{0,3 cm}

\begin{Rmk}
    Note that, for $f \in \hol$ and with the previous notation, $f$ is a product (at $0$) if and only if $\Der_f \not\subset \mm \Der$. From now on, we will adopt this as our definition.
\end{Rmk}

\vspace{0,3 cm}

\begin{Def}
    Let $f \in \what{\hol}$. We say that $f$ is a \emph{product} if $\Der_f \not\subset \mm \what{\Der}$.
\end{Def}

\vspace{0,3 cm}

\noindent For a convergent power series, this definition is equivalent to the previous one: \\

\begin{Prop}
\label{PropFormProd}
    Let $f \in \hol$. Then, $f$ is a product as a convergent power series if and only if so is it as a formal power series.
\end{Prop}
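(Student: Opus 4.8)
The plan is to reduce the statement to a linear-algebra condition on the \emph{linear parts} of logarithmic derivations and then transport it between the convergent and formal settings by faithful flatness, exactly as in the preceding propositions. Recall that $f$ is a product as a convergent power series precisely when $\Der_f \not\subset \mm\Der$, and a product as a formal power series precisely when $\what{\Der}_f \not\subset \mm\what{\Der} = \what{\mm}\what{\Der}$ (the formal $\Der_f$ attached to $f\in\hol$ is, as recalled above, the completion $\what{\Der}_f$). The forward implication is immediate: a non-singular convergent logarithmic derivation $\delta \in \Der_f \setminus \mm\Der$ lies in $\what{\Der}_f$ and still satisfies $\delta \notin \what{\mm}\what{\Der}$, so $f$ is a formal product. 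The content is the converse.

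For this I would introduce the \emph{linear part} (or symbol at $0$) map $\lambda \colon \Der \to \Der/\mm\Der$, sending $\delta = \sum_i a_i\partial_i$ to the class of its constant coefficients; identifying $\Der/\mm\Der \cong \C^n$, a derivation is singular if and only if it lies in $\ker\lambda = \mm\Der$. Restricting $\lambda$ to $\Der_f$ yields a $\C$-linear map whose image $P := \lambda(\Der_f) \subseteq \C^n$ is a finite-dimensional subspace, and by definition $f$ is a convergent product if and only if $P \neq 0$. The analogous formal statement reads: $f$ is a formal product if and only if $\what{P} := \what{\lambda}(\what{\Der}_f) \neq 0$, where $\what{\lambda}\colon \what{\Der}\to\what{\Der}/\what{\mm}\what{\Der}$ is the formal symbol map.

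The key step is to compare $P$ and $\what{P}$. I would apply the exact functor $\what{\hol}\otimes_{\hol}-$ to the composite $\Der_f \hookrightarrow \Der \twoheadrightarrow \Der/\mm\Der$. Using that $\what{\Der}_f$ is the $\mm$-adic completion (hence the base change $\Der_f\otimes_\hol\what\hol$) of $\Der_f$, that $\what{\hol}\otimes_\hol \Der = \what{\Der}$, and that $(\Der/\mm\Der)\otimes_\hol\what{\hol} \cong \what{\Der}/\what{\mm}\what{\Der}$, this completed composite is exactly $\what{\lambda}|_{\what{\Der}_f}$. Since $\what{\hol}$ is flat over $\hol$, the image of a tensored map is the tensor of the image, so $\what{P} \cong P\otimes_{\hol}\what{\hol}$. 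But $P$ is annihilated by $\mm$, hence $P \cong (\hol/\mm)^{\oplus m}$ with $m = \dim_\C P$, so $P\otimes_\hol\what{\hol} \cong (\what{\hol}/\what{\mm})^{\oplus m} \cong \C^m$; in particular $\dim_\C\what{P} = \dim_\C P$. Therefore $P = 0$ if and only if $\what{P} = 0$, which is precisely the desired equivalence ``not a convergent product if and only if not a formal product''.

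I do not expect a genuine obstacle here; the only point requiring care is the bookkeeping in the key step, namely verifying that completing the symbol map $\lambda$ really yields the formal symbol map $\what{\lambda}$ and that the identification $(\Der/\mm\Der)\otimes_\hol\what{\hol} \cong \what{\Der}/\what{\mm}\what{\Der}$ is compatible with it, so that $\what{P}$ is legitimately the base change of $P$. Alternatively, one can phrase the converse without tensor products: given $\delta \in \what{\Der}_f \setminus \what{\mm}\what{\Der}$, use the density of $\Der_f$ in its $\mm$-adic completion $\what{\Der}_f$ to choose $\delta' \in \Der_f$ with $\delta - \delta' \in \what{\mm}\what{\Der}_f \subseteq \what{\mm}\what{\Der}$; then $\delta'$ has the same non-zero linear part as $\delta$, so $\delta' \notin \mm\Der$ and $f$ is a convergent product.
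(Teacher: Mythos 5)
Your proposal is correct and is essentially the paper's own argument in different packaging: both proofs reduce everything to the identification of the formal logarithmic derivations with the completion $\what{\Der}_f \cong \Der_f \otimes_{\hol} \what{\hol}$ and then transport singularity across it using (faithful) flatness of $\what{\hol}$ over $\hol$ --- the paper by completing the inclusion $\Der_f \subset \mm\Der$ in one direction and intersecting back via $\mm\what{\Der} \cap \Der = \mm\Der$ in the other, you by base-changing the evaluation-at-$0$ map and counting dimensions (or, in your alternative, by density of $\Der_f$ in $\what{\Der}_f$). The only slip is terminological: what you call the ``linear part'' map is, in the paper's conventions, the constant part $\delta \mapsto \delta_{-1}$ of a derivation (the value of its coefficients at the origin), not the linear part $\delta_0$.
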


\begin{proof}
    Let us suppose that $f$ is not a product as a convergent series, so that $\Der_f \subset \mm \Der$. Taking completions we get $\what{\Der}_f \subset \mm \what{\Der}$. But $\what{\Der}_f$ is the formal $\Der_f$, so $f$ is not a product as a formal series. Reciprocally, if $f$ is not formally a product, then $\Der_f \subset \what{\Der}_f \subset \mm \what{\Der}$. By faithful flatness, $\mm \what{\Der} \cap \Der = \mm \Der$, so $\Der_f \subset \mm \what{\Der} \cap \Der = \mm \Der$ and $f$ is not convergently a product.
    
\end{proof}

\vspace{0,3 cm}

\noindent The following lemma is well-known, (see for example \cite[Lemmata 7.3 and 7.4]{GS}), and it will allow us to just consider the case in which the local equation of the divisor is not a product. \\

\begin{Lema}
\label{LemProd}
Let $f \in \hol$. Then, $f$ is a product if and only if there exists a coordinate change such that $f=ug$ for some unit $u$ and some convergent power series $g$ in $n-1$ variables. In this case, freeness and strongly Euler-homogeneity are equivalent for $f$ and $g$. Moreover, the divisor defined by $f$ satisfies LCT if and only if so does the divisor defined by $g$ in $\C^{n-1}$.
\end{Lema}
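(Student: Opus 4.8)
The plan is to put $f$ into the normal form $f = ug$ and then establish the three asserted equivalences separately, the statement about LCT being the only one that needs a genuinely topological argument. \emph{Normal form.} By the preceding lemma, $f$ is a product if and only if it admits a nonsingular logarithmic derivation $\delta \in \Der_f \setminus \mm\Der$. Assuming this, I would apply the holomorphic straightening (flow-box) theorem to choose coordinates $x_1, \ldots, x_n$ in which $\delta = \partial_n$. The logarithmic condition then reads $\partial_n f = hf$ for some $h \in \hol$, a linear first-order ODE in $x_n$; taking $u = \exp\left(\int_0^{x_n} h(x', t)\,dt\right)$ produces a unit with $\partial_n u = hu$, so that $g := f/u$ is convergent and satisfies $\partial_n g = 0$, i.e. $g \in \C\{x_1, \ldots, x_{n-1}\}$, giving $f = ug$. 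Conversely, if $f = ug$ with $g$ in $n-1$ variables and $u$ a unit, then $\partial_n f = (\partial_n u / u) f$ shows $\partial_n$ is a nonsingular logarithmic derivation, so $f$ is a product. Hereafter I work in such coordinates and set $R = \C\{x_1, \ldots, x_{n-1}\} \subseteq S = \hol$, a faithfully flat (local) extension.

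\emph{Freeness.} Since $V(f) = V(g)$ we have $\Der_f = \Der_g$ (as submodules of $\Der$), and because $\partial_n g = 0$ the projection forgetting the $\partial_n$-component yields a canonically split exact sequence $0 \to S\partial_n \to \Der_f \to \bar M \to 0$, where $\bar M = \{(a_i)_{i<n} : \sum_{i<n} a_i \partial_i g \in gS\}$ is exactly the kernel of $S^{n-1} \to S/gS$. By flatness this kernel is $L \otimes_R S$, where $L \subseteq R^{n-1}$ is the module of logarithmic derivations of $g$ in $R$; hence $\Der_f \cong S\partial_n \oplus (L \otimes_R S)$. Over the local ring $S$ a direct summand of a free module is free, so $\Der_f$ is free iff $L \otimes_R S$ is free over $S$, which by faithfully flat descent happens iff $L$ is free over $R$. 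Thus $f$ is free iff $g$ is.

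\emph{Strong Euler-homogeneity.} If $g$ is SEH, I would pick a singular $\chi = \sum_{i<n} b_i\partial_i \in \mm_R\Der_R$ with $\chi(g) = g$. Then $\chi(f) = \chi(u)g + u\chi(g) = (1+w)f$ with $w := \chi(u)/u$; since $\chi$ is singular, $w(0) = 0$, so $1 + w$ is a unit and $(1+w)^{-1}\chi$ is a singular derivation sending $f$ to $f$, whence $f$ is SEH. Conversely, if $\delta \in \mm\Der$ has $\delta(f) = f$, then $\delta(g) = (1 - \delta(u)/u)g$ with $1 - \delta(u)/u$ a unit, and rescaling gives a singular $\tilde\delta = \sum_i c_i\partial_i$ with $\tilde\delta(g) = g$; using $\partial_n g = 0$ this reads $g = \sum_{i<n} c_i\partial_i g$ with $c_i \in \mm$. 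Applying the evaluation $S \to R$, $x_n \mapsto 0$ (under which $g$ and the $\partial_i g$ are fixed and each $c_i$ maps into $\mm_R$) exhibits an $R$-derivation in $\mm_R\Der_R$ that is Euler for $g$, so $g$ is SEH.

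\emph{LCT.} Writing $D = D' \times \C$ and $U = U' \times \C$ with $U' = \C^{n-1}\setminus D'$ and $j = j' \times \mathrm{id}$, the logarithmic and meromorphic de Rham complexes both factor as the tensor product of the pullback of the corresponding complex for $D'$ with the fixed de Rham complex $\Omega^\bullet_\C$ of the second factor, while $Rj_*\C_U$ factors, by Künneth, as the external product of $Rj'_*\C_{U'}$ with $\C_\C$. The inclusion $\Omega^\bullet_X(\log D)\hookrightarrow\Omega^\bullet_X(*D)$ is then the inclusion for $D'$ tensored with the identity on $\Omega^\bullet_\C$; by the Künneth formula for the cohomology sheaves $\call{H}^k$, it is a quasi-isomorphism if and only if the inclusion for $D'$ is. Hence $D$ satisfies LCT iff $D'$ does. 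I expect this last step — making the product factorization of the three complexes precise and checking the maps are compatible — to be the main technical point, the three algebraic equivalences reducing cleanly to faithful flatness once the normal form is available.
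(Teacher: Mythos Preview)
The paper does not give its own proof of this lemma; it is stated as well-known with a reference to \cite[Lemmata~7.3 and~7.4]{GS}, so there is no in-paper argument to compare your proposal against. Your argument is correct and is essentially the standard one found in the cited reference and in \cite[Lemma~3.5]{Saito}: straighten a nonsingular logarithmic vector field to obtain the normal form $f=ug$, then transfer freeness via the splitting $\Der_f \cong \hol\,\partial_n \oplus (L\otimes_R \hol)$ (with $L$ the $R$-module of logarithmic derivations of $g$) together with faithfully flat descent, transfer strong Euler-homogeneity by rescaling and restricting to $\{x_n=0\}$, and transfer LCT through the product factorization of the logarithmic and meromorphic de Rham complexes combined with the Poincar\'e lemma on the $\C$-factor.
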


\vspace{0,2 cm}

\section{Semisimple and nilpotent derivations}
\label{SecSN}

\noindent It was K. Saito who, in order to prove the equivalence between strong Euler-homogeneity and local quasihomogeneity for isolated singularities \cite[Theorem 4.1]{Saito71}, first introduced the notions of a semisimple differential operator and a niloperator. \\

\noindent Given a singular derivation $\delta$, K. Saito says that $\delta$ is \emph{semisimple} if it only has linear part and its associated matrix is diagonalizable. This definition does depend on the coordinate system and so it is not intrinsic. He says that $\delta$ is a \emph{niloperator} if the associated matrix of $\delta_0$ is nilpotent. Note that this definition does not depend on the coordinate system. \\

\noindent R. Gérard and A. Levelt introduce in \cite{GL} intrinsic definitions of semisimplicity and nilpotency and those will be the ones that will be used in our discussion. They first define the category $\mathcal{V}$ of complete filtered $k$-vector spaces as the category whose 

\begin{itemize}
    \item \textit{objects} are $k$-vector spaces $V$ equipped with a decreasing filtration $F = \{F^i V\}_{i \in \N}$ such that every vector space $V/F^i V$ is finite dimensional, $\bigcap_{i \in \N} F^i V = \{0\}$ and $V$ is complete for the topology defined by $F$ (where $F$ is a fundamental system of neighbourhoods of $0$). 
    \item \textit{morphisms} $T: V \to W$ are $k$-linear maps such that $T(F^i V) \subset F^i W$ for all $i \in \N$.
\end{itemize}

\vspace{0,3 cm}

\noindent The classical definitions of semisimple and nilpotent endomorphisms are then replaced by these new ones.

\vspace{0,2 cm}

\begin{Def}
Let $V \in \Obj(\mathcal{V})$, a morphism $T: V \to V$ is called:

\begin{itemize}
    \item \emph{semisimple} if every closed vector space $H$ stable by $T$ admits a supplementary subspace $H'$ (that is, $V = H \oplus H'$) that is also stable by $T$.
    
    \item \emph{topologically nilpotent} if for all $x \in V$, the sequence $\{T^n x\}_{n \in \N}$ tends to $0$ (that is, for all $i \in \N$ there exists $n_0 \in \N$ such that for $n \geq n_0, T^n x \in F^i V$).
\end{itemize}

\end{Def}

\vspace{0,2 cm}

\noindent They state an analogue of the classical Jordan decomposition theorem for the morphisms of this category. Then, they apply it to the case of formal power series and its singular derivations \cite[Théorèmes 1.5, 2.2 and 2.3]{GL}. Although the theorem is valid for every algebraically closed field of characteristic $0$, we will formulate it in the case $k = \C$. 

\vspace{0,3 cm}

\begin{Th}[Jordan decomposition]
\label{ThChangeCoordDiag}
Let $\delta$ be a singular derivation of $\what{\hol}$. Then, there exist two commuting singular derivations $\delta_S$, $\delta_N$ such that $\delta_S$ is semisimple, $\delta_N$ is topologically nilpotent and $\delta = \delta_S + \delta_N$. Moreover, this decomposition is unique and there exists a regular system of parameters of $\what{\hol}$ in which $\delta_S$ is diagonal (i.e. $\delta_S$ is of the form $\sum_{i=1}^n \lambda_i x_i \partial_i$) and the matrix of the linear part of $\delta_N$ is nilpotent.
\end{Th}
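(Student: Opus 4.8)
The plan is to regard the singular derivation $\delta$ not as a vector field but as a $\C$-linear endomorphism of $\what{\hol}$ and to run the abstract Jordan decomposition of Gérard and Levelt on it. Since $\delta$ is singular, its homogeneous expansion starts at $\delta_0$, so $\delta(\mm^k) \subseteq \mm^k$ for every $k$; in particular $\delta$ kills constants, preserves $\what{\mm}$, and makes $\what{\mm}$, filtered by its powers $\what{\mm}^i$, an object of the category $\mathcal{V}$ on which $\delta$ is a morphism. Applying the categorical Jordan decomposition \cite[Théorème 1.5]{GL} to this morphism produces a unique splitting $\delta = D_S + D_N$ into commuting morphisms with $D_S$ semisimple and $D_N$ topologically nilpotent.

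The crux is to show that $D_S$ and $D_N$ are again derivations rather than merely linear maps. I would do this at finite level and pass to the limit. For each $k$, the ideal $\mm^k$ is $\delta$-stable, so $\delta$ induces a derivation of the finite-dimensional algebra $A_k = \what{\hol}/\mm^k$; writing its ordinary additive Jordan decomposition $\bar{\delta} = s_k + n_k$, the classical lemma that the semisimple and nilpotent parts of a derivation are derivations applies (decomposing $A_k$ into generalized eigenspaces of $\bar{\delta}$, the Leibniz rule forces $A_\lambda \cdot A_\mu \subseteq A_{\lambda+\mu}$, whence $s_k$, acting as the scalar $\lambda$ on $A_\lambda$, satisfies Leibniz). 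The projections $A_{k+1} \to A_k$ intertwine the $\bar{\delta}$'s and hence, by uniqueness of the finite-dimensional Jordan decomposition, their semisimple and nilpotent parts; taking inverse limits yields derivations $\delta_S = \varprojlim s_k$ and $\delta_N = \varprojlim n_k$ of $\what{\hol} = \varprojlim A_k$. One then checks that $\delta_N$ is topologically nilpotent (nilpotence of $n_k$ on $A_k$ gives $\delta_N^m(f) \in \mm^k$ for $m \gg 0$), that $\delta_S$ is semisimple in the sense of $\mathcal{V}$, and that they commute; uniqueness at each level gives global uniqueness and identifies $\delta_S, \delta_N$ with $D_S, D_N$. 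Both are singular because their linear parts are exactly the commuting semisimple and nilpotent parts of the finite-dimensional operator $\delta_0$ on $\mm/\mm^2$, which carry no constant term.

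For the coordinate statement I would linearize $\delta_S$. Its linear part is the semisimple part of $\delta_0$, diagonalizable on $\mm/\mm^2$ with eigenvalues $\lambda_1, \ldots, \lambda_n$; choose linear coordinates so that this linear part is $\sum_i \lambda_i x_i \partial_i$. Because $\delta_S$ is semisimple, $\what{\mm}$ splits into eigenspaces whose eigenvalues are the $\langle \alpha, \lambda \rangle$, and each eigenvector of the linear part lifts to a genuine eigenvector of $\delta_S$ with the same linear part; this is precisely the step where semisimplicity overcomes the resonances $\langle \alpha, \lambda \rangle = \lambda_i$ that would otherwise obstruct a term-by-term solution of $(\delta_S - \lambda_i) y_i = 0$. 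Lifting $x_1, \ldots, x_n$ this way produces $y_1, \ldots, y_n$ with independent linear parts, hence a regular system of parameters, in which $\delta_S(y_i) = \lambda_i y_i$, i.e. $\delta_S = \sum_{i=1}^n \lambda_i y_i \partial_{y_i}$. Finally, in these coordinates $\delta_N = \delta - \delta_S$ has linear part equal to the nilpotent part of $\delta_0$, so its matrix is nilpotent. I expect the main obstacle to be the second paragraph together with the linearization: transporting the derivation property and the semisimplicity through the completion and the inverse limit, and, in the ``moreover'' part, extracting the vanishing of the resonant obstructions from semisimplicity rather than assuming it away.
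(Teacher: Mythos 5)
The paper offers no proof of this statement: it is imported wholesale from Gérard--Levelt \cite[Théorèmes 1.5, 2.2 and 2.3]{GL}, so your proposal should be measured against that source rather than against anything in the paper, and what you have written is in substance a correct reconstruction of the cited argument. Your architecture --- the categorical decomposition of \cite[Théorème 1.5]{GL} for existence and uniqueness as linear maps, then the finite-level argument (generalized eigenspaces plus Leibniz) showing that the Jordan parts of the induced derivation of $\what{\hol}/\what{\mm}^k$ are again derivations, compatibility under the projections $\what{\hol}/\what{\mm}^{k+1} \to \what{\hol}/\what{\mm}^k$, and passage to the inverse limit --- is exactly how one recovers that the abstract parts $D_S$, $D_N$ are themselves derivations. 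Two points deserve tightening. First, identifying the limits of the $s_k$ and $n_k$ with $D_S$ and $D_N$ uses the equivalence ``semisimple (resp.\ topologically nilpotent) if and only if every finite-level reduction is diagonalizable (resp.\ nilpotent)''; the nilpotent half is Proposition \ref{PropNilpQuot} of the paper and the semisimple half is \cite[Proposition 1.3]{GL}, so invoke these rather than saying ``one then checks''. Second, the linearization of $\delta_S$ needs no resonance analysis at all, and phrasing it as a term-by-term lifting obscures this: since $\what{\mm}^2$ is a closed $\delta_S$-stable subspace of $\what{\mm}$, categorical semisimplicity directly provides a $\delta_S$-stable complement $H$ with $\what{\mm} = H \oplus \what{\mm}^2$; the projection $\what{\mm} \to \what{\mm}/\what{\mm}^2$ restricts to an isomorphism on $H$ intertwining $\delta_S|_H$ with the diagonalizable induced map, so an eigenbasis $y_1, \ldots, y_n$ of $H$ is a regular system of parameters satisfying $\delta_S(y_i) = \lambda_i y_i$, whence $\delta_S = \sum_{i=1}^n \delta_S(y_i)\, \partial_{y_i} = \sum_{i=1}^n \lambda_i y_i \partial_{y_i}$, and the linear part of $\delta_N = \delta - \delta_S$ is automatically nilpotent, being the nilpotent Jordan part of $\delta_0$. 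With these repairs your sketch closes up and is, modulo presentation, the proof of Gérard and Levelt that the paper chose to cite instead of reproduce.
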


\vspace{0,2 cm}

\begin{Rmk}
    K. Saito already proved that there exists a coordinate system in which this decomposition holds \cite[Theorem 3.1]{Saito71}. Theorem \ref{ThChangeCoordDiag} shows that, actually, the decomposition holds intrinsically and that, in a particular coordinate system, it takes the desired form by K. Saito.
\end{Rmk}

\vspace{0,3 cm}

\noindent Let us note that a derivation $\delta \in \Der$ is singular if and only if $\delta(\mm^k) \subseteq \mm^k$ for all $k \in \N$, so they induce a $\C$-linear map $\delta^{(k)}: \hol/\mm^k \to \hol/\mm^k$ in each quotient. The same property holds for formal singular derivations $\delta \in \mm \what{\Der}$ in the quotients $\what{\hol}/\what{\mm}^k$. The following proposition relates the properties of these maps with those of $\delta$. \\

\begin{Prop}
\label{PropNilpQuot}
A derivation $\delta \in \mm\what{\Der}$ is topologically nilpotent if and only if, for every $k \in \N$, the induced $\C$-linear map $\delta^{(k)}: \what{\hol}/\what{\mm}^k \to \what{\hol}/\what{\mm}^k$ is nilpotent.
\end{Prop}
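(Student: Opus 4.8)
The plan is to work directly with the definition of topological nilpotence in the category $\mathcal{V}$, applied to $\what{\hol}$ filtered by $F^i\what{\hol} = \what{\mm}^i$ (which is indeed an object of $\mathcal{V}$, the defining axioms being exactly that each $\what{\hol}/\what{\mm}^i$ is finite-dimensional, that $\bigcap_i \what{\mm}^i = 0$, and that $\what{\hol}$ is complete). Since $\delta$ is singular we have $\delta(\what{\mm}^i) \subseteq \what{\mm}^i$ for every $i$, so $\delta$ is a morphism in $\mathcal{V}$ and, crucially, the induced maps are compatible with composition: $(\delta^{(i)})^n = (\delta^n)^{(i)}$ on $\what{\hol}/\what{\mm}^i$. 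Under this identification, the condition $\delta^n x \in \what{\mm}^i$ is equivalent to $(\delta^{(i)})^n \bar{x} = 0$, where $\bar{x}$ denotes the class of $x$. This dictionary reduces the whole statement to comparing two avatars of nilpotence, and I would record it explicitly first, since it is the hinge of both implications.

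For the backward implication, assume each $\delta^{(k)}$ is nilpotent. Fix $x \in \what{\hol}$ and $i \in \N$. By hypothesis there is some $N_i$ with $(\delta^{(i)})^{N_i} = 0$ on $\what{\hol}/\what{\mm}^i$, whence $(\delta^{(i)})^n = 0$ for every $n \geq N_i$; translating back through the dictionary, $\delta^n x \in \what{\mm}^i$ for all $n \geq N_i$. As $N_i$ does not depend on $x$, this is precisely the assertion that $\{\delta^n x\}$ tends to $0$ for every $x$, i.e.\ that $\delta$ is topologically nilpotent. (The fact that the bound is uniform in $x$ is in fact stronger than the definition demands.)

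For the forward implication, assume $\delta$ is topologically nilpotent and fix $k$. For each $x$, applying the definition with $i = k$ produces some $n_0(x)$ with $\delta^n x \in \what{\mm}^k$ for all $n \geq n_0(x)$, that is, $(\delta^{(k)})^{n_0(x)}\bar{x} = 0$; thus $\delta^{(k)}$ annihilates every individual vector of $\what{\hol}/\what{\mm}^k$ after sufficiently many iterations. The remaining point is to upgrade this pointwise (``locally nilpotent'') statement to genuine nilpotence, and this is where finite-dimensionality enters: $\what{\hol}/\what{\mm}^k$ is a finite-dimensional $\C$-vector space, so choosing a basis $\bar{x}_1, \ldots, \bar{x}_m$ and setting $N = \max_j n_0(x_j)$ gives $(\delta^{(k)})^N \bar{x}_j = 0$ for all $j$, hence $(\delta^{(k)})^N = 0$ and $\delta^{(k)}$ is nilpotent.

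I do not expect a serious obstacle; the only genuine subtlety is the bookkeeping of the order of quantifiers in the forward direction, where one must pass from a bound $n_0(x)$ depending on the chosen vector to a uniform one, using the finiteness of $\dim_\C \what{\hol}/\what{\mm}^k$. Accordingly, the two facts I would be most careful to state cleanly are the compatibility $(\delta^{(i)})^n = (\delta^n)^{(i)}$ and the equivalence $\delta^n x \in \what{\mm}^i \iff (\delta^{(i)})^n \bar{x} = 0$, as everything else is a direct unwinding of definitions.
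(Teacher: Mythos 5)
Your proof is correct and takes essentially the same route as the paper's: the forward implication picks a basis of the finite-dimensional quotient $\what{\hol}/\what{\mm}^k$ and takes the maximum of the individual exponents, and the backward implication uses that nilpotence of $\delta^{(k)}$ provides a bound uniform in the element. The only cosmetic difference is that you record the dictionary $(\delta^{(i)})^n = (\delta^n)^{(i)}$ explicitly, which the paper uses implicitly.
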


\begin{proof}

Suppose $\delta$ is topologically nilpotent. Let us fix $k \in \N$ and let $[a_1], \ldots, [a_m]$ be a basis of the $\C$-vector space $\what{\hol}/\what{\mm}^k$. By definition, for each $i \in \{1, \ldots, m\}$ there exists $n_i \in \N$ such that $\delta^{n_i} (a_i) \in \what{\mm}^k$. Let $n = \max \{n_i\}$, so $\delta^n(a_i) \in \what{\mm}^k$ for all $i = 1, \ldots, m$ and $(\delta^{(k)})^n ([a_i]) = [\delta^n(a_i)] = 0$. Since $(\delta^{(k)})^n$ is zero when applied to a basis, it must be the zero map, so $\delta^{(k)}$ is nilpotent. 

\vspace{0,2 cm}

\noindent Reciprocally, let $f \in \what{\hol}$ and $k \in \N$. Since by hypothesis $\delta^{(k)}$ is nilpotent, there exists $n_0 \in \N$ such that $(\delta^{(k)})^{n_0} = 0$. So $\delta^{n_0} (f) \in \what{\mm}^k$ and $\delta^n(f) \in \what{\mm}^k$ for all $n \geq n_0$.

\end{proof}

\vspace{0,2 cm}

\begin{Rmk}
By \cite[Proposition 1.3]{GL}, we also know that a singular derivation $\delta$ is semisimple if and only if $\delta^{(k)}$ is semisimple (or diagonalizable, since these two concepts are equivalent for finite dimensional complex vector spaces) for all $k \in \N$. In particular, if $\delta = \underline{x} A \overline{\partial}$ with $A$ semisimple (definition of semisimplicity for K. Saito), after a linear coordinate change, $A$ is diagonal and every induced map $\delta^{(k)}$ has a diagonal matrix with respect to the canonical basis, so $\delta$ is semisimple.
\end{Rmk}

\vspace{0,3 cm}

\begin{Cor}
\label{EquivNilp}
A derivation $\delta \in \mm\what{\Der}$ is topologically nilpotent if and only if the matrix of $\delta_0$ is nilpotent. In particular, singular derivations with no linear part are topologically nilpotent.
\end{Cor}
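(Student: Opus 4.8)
The plan is to reduce everything to Proposition \ref{PropNilpQuot}, which characterizes topological nilpotency of $\delta$ by the nilpotency of each induced finite-dimensional operator $\delta^{(k)}$ on $\what{\hol}/\what{\mm}^k$, and then to compare the nilpotency of these operators with that of the constant matrix $A$ appearing in $\delta_0 = \underline{x} A \overline{\partial}$. So I first reformulate the statement as: $\delta^{(k)}$ is nilpotent for all $k$ if and only if $A$ is nilpotent.

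For the forward direction I would take $k=2$. Since $\delta$ is singular it preserves $\what{\mm}$, so $\delta^{(2)}$ restricts to the invariant subspace $\what{\mm}/\what{\mm}^2 \subseteq \what{\hol}/\what{\mm}^2$. On this subspace the higher-order parts $\delta_i$ with $i\geq 1$ vanish modulo $\what{\mm}^2$, so the restriction is exactly the action of $\delta_0$, whose matrix in the basis $x_1,\ldots,x_n$ is $A$ (up to transpose, which is irrelevant for nilpotency). As the restriction of a nilpotent operator to an invariant subspace is nilpotent, we conclude that $A$ is nilpotent.

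For the converse, the key point is to propagate nilpotency of $A$ to all the $\delta^{(k)}$. I would filter $\what{\hol}/\what{\mm}^k$ by the images of the powers $\what{\mm}^d$, $0 \le d \le k$; since $\delta$ is singular it respects this filtration, and an endomorphism of a finite-dimensional space preserving a finite filtration with nilpotent graded pieces is itself nilpotent. Thus it suffices to show that the induced map on each graded piece $\gr^d = \what{\mm}^d/\what{\mm}^{d+1}$ is nilpotent. On $\gr^d$, the space of homogeneous polynomials of degree $d$, the parts $\delta_i$ with $i \geq 1$ raise degree and hence act as zero, so the induced map is the one given by the linear vector field $\delta_0$. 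After a linear coordinate change putting $A$ in triangular form, the eigenvalues of $\delta_0$ on $\gr^d$ are the sums $\sum_i a_i \lambda_i$ with $a_i \in \N$ and $\sum_i a_i = d$, where $\lambda_1, \ldots, \lambda_n$ are the eigenvalues of $A$. Since $A$ is nilpotent all $\lambda_i = 0$, these eigenvalues vanish, and $\delta_0$ acts nilpotently on every $\gr^d$. Hence each $\delta^{(k)}$ is nilpotent, and we conclude by Proposition \ref{PropNilpQuot}.

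I expect this converse propagation to be the main obstacle: nilpotency of the linear part alone is not visibly sufficient, and one must control the action on all symmetric powers simultaneously, which is what the associated-graded computation above achieves. Finally, the ``in particular'' clause is immediate: if $\delta$ has no linear part then $\delta_0 = 0$, so its matrix $A = 0$ is trivially nilpotent and $\delta$ is topologically nilpotent.
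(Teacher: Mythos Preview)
Your proof is correct and follows essentially the same route as the paper: both directions are reduced to Proposition~\ref{PropNilpQuot}, the forward implication is obtained by restricting $\delta^{(2)}$ to $\what{\mm}/\what{\mm}^2$, and the converse by showing each $\delta^{(k)}$ is nilpotent from the nilpotency of $A$. The only difference is that the paper outsources this last step to \cite[Lemma~2.4]{GS}, whereas you supply the argument directly via the filtration by powers of $\what{\mm}$ and the eigenvalue computation on the graded pieces; this is exactly the content of that lemma.
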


\begin{proof}
    If the matrix of $\delta_0$ is nilpotent, by \cite[Lemma 2.4]{GS}, every induced map $\delta^{(k)}$ is nilpotent and we deduce that $\delta$ is topologically nilpotent by Proposition \ref{PropNilpQuot}. Reciprocally, if $\delta$ is topologically nilpotent, then $\delta^{(2)}$ is nilpotent. The matrix of $\delta_0$, call it $A$, coincides with that of the restriction of $\delta^{(2)}$ to $\what{\mm}/\what{\mm}^2$ with respect to the basis $\{[x_1], \ldots, [x_n]\}$. 
    Since the restriction is still a nilpotent map, its associated matrix, $A$, is nilpotent.
    
\end{proof}

\begin{Rmk}
    This result states that the definitions of topologically nilpotent derivation by Gérard-Levelt and niloperator by K. Saito are equivalent.
\end{Rmk}

\vspace{0,3 cm}

\noindent Motivated by Corollary \ref{EquivNilp} and for the sake of simplicity, in what follows, we will remove the word ``topologically'' when referring to a topologically nilpotent derivation. \\

\noindent As we might expect, it is not true that if $\delta \in \mm \Der$ with $\delta_0$ semisimple, then $\delta$ is semisimple. Just consider $\delta = x \partial_x + y^2 \partial_y$. We have that $\delta_0 = x \partial_x$ is semisimple, $y^2 \partial_y$ is nilpotent and $[x \partial_x, y^2 \partial_y] = 0$, so $\delta_S = x \partial_x$ and, by uniqueness, $\delta$ cannot be semisimple. \\

\begin{Cor}
    Let $\delta$ be a singular derivation of $\what{\hol}$. If we denote by $\delta_{S,0}$ and $\delta_{N,0}$ the linear parts of $\delta_S$ and $\delta_N$, respectively, then $\delta_0 = \delta_{S,0} + \delta_{N,0}$ is the Jordan decomposition of $\delta_0$, that is, $\delta_{S,0} = \delta_{0,S}$ and $\delta_{N,0} = \delta_{0,N}$, where $\delta_{0,S}$ and $\delta_{0,N}$ are, respectively, the semisimple and nilpotent parts of $\delta_0$.
\end{Cor}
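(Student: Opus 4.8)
The plan is to reduce the statement to the uniqueness of the classical Jordan decomposition of the matrix of $\delta_0$. Recall that the assignment sending a linear singular derivation $\underline{x} A \overline{\partial}$ to its matrix $A$ is a Lie algebra isomorphism onto $\mathfrak{gl}_n(\C)$; under it, semisimplicity and nilpotency of linear derivations correspond to the same properties of matrices, and commuting derivations correspond to commuting matrices. So it suffices to check three things about the linear parts: that $\delta_0 = \delta_{S,0} + \delta_{N,0}$, that $\delta_{S,0}$ is semisimple while $\delta_{N,0}$ is nilpotent, and that $\delta_{S,0}$ and $\delta_{N,0}$ commute. Once these are in place, the uniqueness of the classical Jordan decomposition of $\delta_0$ forces $\delta_{S,0} = \delta_{0,S}$ and $\delta_{N,0} = \delta_{0,N}$.

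First I would record the easy ingredients. Taking the linear part is the projection onto the degree-zero graded piece of $\Der$, so it is additive and $\delta_0 = \delta_{S,0} + \delta_{N,0}$ is immediate. Since $\delta_N$ is nilpotent, Corollary \ref{EquivNilp} gives directly that the matrix of $\delta_{N,0}$ is nilpotent. For the commutation, I would use that the Lie bracket respects the grading $\Der = \prod_{i \geq -1} \Der_i$ via $[\Der_i, \Der_j] \subseteq \Der_{i+j}$; for singular derivations only indices $i,j \geq 0$ occur, so the degree-zero component of $[\delta_S, \delta_N]$ is exactly $[\delta_{S,0}, \delta_{N,0}]$. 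As $\delta_S$ and $\delta_N$ commute, this bracket vanishes, i.e.\ $\delta_{S,0}$ and $\delta_{N,0}$ commute.

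The crux, and the step I expect to be the main obstacle, is showing that the matrix of $\delta_{S,0}$ is semisimple. Here I would mimic the argument in the proof of Corollary \ref{EquivNilp}. By \cite[Proposition 1.3]{GL} (see the Remark following Proposition \ref{PropNilpQuot}), semisimplicity of $\delta_S$ means that every induced map $\delta_S^{(k)}$ on $\what{\hol}/\what{\mm}^k$ is semisimple; in particular $\delta_S^{(2)}$ is semisimple. Because $\delta_S$ is singular it preserves the filtration, so $\what{\mm}/\what{\mm}^2$ is a $\delta_S^{(2)}$-invariant subspace of $\what{\hol}/\what{\mm}^2$, and the matrix of $\delta_{S,0}$ with respect to the basis $\{[x_1], \ldots, [x_n]\}$ is precisely the matrix of the restriction of $\delta_S^{(2)}$ to $\what{\mm}/\what{\mm}^2$. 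The point to invoke is the standard linear-algebra fact that the restriction of a semisimple (diagonalizable) endomorphism of a finite-dimensional complex vector space to an invariant subspace is again semisimple; this yields that $\delta_{S,0}$ has semisimple matrix.

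Finally, assembling the three facts and passing through the Lie algebra isomorphism above, $\delta_0$ is expressed as a sum of a semisimple and a nilpotent linear derivation that commute. By the uniqueness of the classical Jordan decomposition these must be its semisimple and nilpotent parts, giving $\delta_{S,0} = \delta_{0,S}$ and $\delta_{N,0} = \delta_{0,N}$, as claimed.
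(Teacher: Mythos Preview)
Your proof is correct and follows essentially the same approach as the paper: both arguments identify $\delta_{S,0}$ and $\delta_{N,0}$ with the restrictions of $\delta_S^{(2)}$ and $\delta_N^{(2)}$ to $\what{\mm}/\what{\mm}^2$ (so their matrices are semisimple and nilpotent), note that $[\delta_{S,0},\delta_{N,0}] = [\delta_S,\delta_N]_0 = 0$, and then invoke uniqueness of the classical Jordan decomposition of $\delta_0$. Your write-up is simply more explicit about each step, but there is no substantive difference in strategy.
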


\begin{proof}
    By the previous discussion about the restriction of the induced maps to $\what{\mm}/\what{\mm}^2$, the matrices of $\delta_{S,0}$ and $\delta_{N,0}$ are, respectively, semisimple and nilpotent. Thus, $\delta_{S,0}$
    is semisimple and $\delta_{N,0}$ is nilpotent. Besides, $[\delta_{S,0}, \delta_{N,0}] = [\delta_S, \delta_N]_0 = 0$ since $\delta_S$ and $\delta_N$ commute. So $\delta_0 = (\delta_S + \delta_N)_0 = \delta_{S,0} + \delta_{N,0}$ is written as the sum of a semisimple and a nilpotent derivation that commute. By uniqueness, this must be the Jordan decomposition of $\delta_0$.
    
\end{proof}

\vspace{0,3 cm}

\begin{Cor}
\label{CorLogN}
    Let $f \in \what{\hol}$, $f \neq 0$, and let $\delta \in \mm \what{\Der}$ be a nilpotent derivation such that $\delta(f) = bf$ for some $b \in \what{\hol}$. Then $b \in \what{\mm}$.
\end{Cor}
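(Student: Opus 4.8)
The plan is to read off the constant term of $b$ from the lowest-degree part of the identity $\delta(f)=bf$, using that the linear part $\delta_0$ is nilpotent. First I would write $b = \beta + b'$ with $\beta = b(0) \in \C$ its constant term and $b' \in \what{\mm}$, so that the assertion $b \in \what{\mm}$ is exactly the claim $\beta = 0$. Since $f \neq 0$, let $d$ be its order and write $f = f_d + f_{d+1} + \cdots$ with $f_d \neq 0$ the initial (lowest-degree homogeneous) form.

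Next I would compare the homogeneous components of degree $d$ on the two sides of $\delta(f) = bf$. Writing $\delta = \delta_0 + \delta_1 + \cdots$ as in the preliminaries, each $\delta_i$ raises the degree of a homogeneous polynomial by $i$, so the degree-$d$ component of $\delta(f)$ is precisely $\delta_0(f_d)$. On the right-hand side, $b' f$ lies in degrees $> d$, so the degree-$d$ component of $bf$ is $\beta f_d$. Equating the two yields the key identity $\delta_0(f_d) = \beta f_d$; that is, the nonzero form $f_d$ is an eigenvector of $\delta_0$, acting on the space of degree-$d$ homogeneous polynomials, with eigenvalue $\beta$.

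It then suffices to show that $\delta_0$ acts nilpotently on degree-$d$ forms, since a nilpotent operator has no nonzero eigenvalue and $f_d \neq 0$ then forces $\beta = 0$. Here I would use that $\delta$ is nilpotent: by Proposition \ref{PropNilpQuot} the induced map $\delta^{(d+1)}$ on $\what{\hol}/\what{\mm}^{d+1}$ is nilpotent. The subspace $\what{\mm}^d/\what{\mm}^{d+1}$, which we identify with the degree-$d$ forms, is stable under $\delta^{(d+1)}$, and the restriction of $\delta^{(d+1)}$ to it is exactly the action of $\delta_0$ (the components $\delta_i$ with $i \geq 1$ land in $\what{\mm}^{d+1}$). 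Being the restriction of a nilpotent operator to a stable subspace, this action is nilpotent, which gives $\beta = 0$ and hence $b \in \what{\mm}$.

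The argument is short; the one point needing care is the nilpotency of $\delta_0$ on each graded piece, which I would deduce as above from Proposition \ref{PropNilpQuot} (equivalently, from Corollary \ref{EquivNilp} together with the standard fact that a nilpotent matrix induces a nilpotent action on every symmetric power). The case $d = 0$, where $f$ has nonzero constant term, is immediate: $\delta_0$ annihilates constants, so the identity reads $0 = \beta f_d$ and again $\beta = 0$.
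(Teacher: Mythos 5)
Your proof is correct and takes essentially the same route as the paper: the paper also reduces to the finite-dimensional quotient $\what{\hol}/\what{\mm}^{k+1}$ (with $k$ the order of $f$) via Proposition \ref{PropNilpQuot}, where the class of $f$ --- which is exactly your initial form $f_d$ --- is an eigenvector of the nilpotent induced map with eigenvalue the constant term $b_0$ of $b$, forcing $b_0 = 0$. The only cosmetic difference is that the paper packages the eigenvector step as an induction $\delta^n(f) \equiv b_0^n f \pmod{\what{\mm}^{k+1}}$ for the full derivation $\delta$, whereas you isolate the degree-$d$ graded piece and work with the linear part $\delta_0$ restricted to it.
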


\begin{proof}
    Since $\delta$ is singular, $\delta(f) \in \what{\mm}$, so if $f$ is a unit, then it is clear that $b \in \what{\mm}$. If $f \in \what{\mm}$, let $k$ be such that $f \in \what{\mm}^k \setminus \what{\mm}^{k+1}$. It is easy to see by induction that $\delta^{n} (f) \equiv b_0^n f (\hspace{-0,2 cm}\mod \what{\mm}^{k+1})$ for all $n \in \N$, where $b_0$ is the constant coefficient of $b$. By Proposition \ref{PropNilpQuot}, $\delta^{(k+1)}$ is nilpotent, so there exists $n$ with $(\delta^{(k+1)})^n = 0$. But then

    $$ 0 = (\delta^{(k+1)})^n([f]) = [\delta^n(f)] = b_0^n [f].$$

    \vspace{0,2 cm}

    \noindent Since $f \notin \what{\mm}^{k+1}$, $[f] \neq 0$, so $b_0 = 0$ and $b \in \what{\mm}$.
    
\end{proof}

\vspace{0,3 cm}

\noindent Let us prove that the decomposition of a singular derivation into a semisimple and a nilpotent part preserves the property of being logarithmic. This seems very natural, but we have not found it in the literature.

\vspace{0,3 cm}

\begin{Prop}
\label{PropSNLog}
Let $f \in \what{\hol}$ and let $\delta$ be a singular logarithmic derivation for $f$. Then $\delta_S$ and $\delta_N$ as in Theorem \ref{ThChangeCoordDiag} are also (singular) logarithmic derivations for $f$.
\end{Prop}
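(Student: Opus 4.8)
The plan is to reduce the statement to the assertion that the Jordan components of $\delta$ preserve the ideal $\langle f \rangle$, and then to verify this preservation one finite-dimensional quotient at a time. First I would recast the hypothesis: saying that $\delta$ is logarithmic for $f$, i.e. that $\delta(f) = bf$ for some $b \in \what{\hol}$, is equivalent to saying that $\delta$ stabilizes the principal ideal $I := \langle f \rangle$, since $\delta(gf) = (\delta(g) + gb)f \in I$ for every $g$. Because $\what{\hol}$ is a Noetherian complete local ring, $I$ is closed for the $\what{\mm}$-adic topology; concretely, $\bigcap_{k \in \N} (I + \what{\mm}^k) = I$ by Krull's intersection theorem. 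Thus it will suffice to show that $\delta_S$ stabilizes $I$, for then $\delta_N(f) = \delta(f) - \delta_S(f) \in I$ follows at once.

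Next I would descend to the finite-dimensional quotients $Q_k := \what{\hol}/\what{\mm}^k$, on which $\delta$ induces the endomorphism $\delta^{(k)}$. The key point is that the decomposition $\delta^{(k)} = (\delta_S)^{(k)} + (\delta_N)^{(k)}$ is precisely the classical Jordan decomposition of $\delta^{(k)}$: by the remark following Proposition \ref{PropNilpQuot} the map $(\delta_S)^{(k)}$ is semisimple, by Proposition \ref{PropNilpQuot} the map $(\delta_N)^{(k)}$ is nilpotent, and the two commute because $\delta_S$ and $\delta_N$ do. Uniqueness of the Jordan decomposition in the finite-dimensional space $Q_k$ then identifies $(\delta_S)^{(k)}$ and $(\delta_N)^{(k)}$ with the semisimple and nilpotent parts of $\delta^{(k)}$.

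Now I would invoke the classical fact that the semisimple part of an endomorphism of a finite-dimensional vector space is a polynomial in that endomorphism. Consequently every $\delta^{(k)}$-stable subspace of $Q_k$ is automatically $(\delta_S)^{(k)}$-stable. Applying this to $I_k$, the image of $I$ in $Q_k$ (which is $\delta^{(k)}$-stable because $\delta$ stabilizes $I$), gives $(\delta_S)^{(k)}([f]) \in I_k$, i.e. $\delta_S(f) \in I + \what{\mm}^k$. Since this holds for every $k \in \N$, the closedness of $I$ yields $\delta_S(f) \in \bigcap_k (I + \what{\mm}^k) = I$, so $\delta_S$ is logarithmic for $f$, and hence so is $\delta_N$.

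I expect the main obstacle to be the interface between the Gérard–Levelt categorical decomposition of Theorem \ref{ThChangeCoordDiag} and the ordinary finite-dimensional Jordan decomposition: everything hinges on knowing that the two agree on each $Q_k$, which is exactly what the semisimplicity and nilpotency criteria for the induced maps guarantee. The second point that must be handled with care, rather than glossed over, is the closedness of $\langle f \rangle$, since it is precisely what upgrades the ``$\delta_S$-stability modulo every $\what{\mm}^k$'' obtained level by level into genuine stability of $I$; without it the passage to the limit would not be justified.
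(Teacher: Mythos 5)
Your proof is correct and follows essentially the same route as the paper's: on each finite-dimensional quotient $\what{\hol}/\what{\mm}^k$ you identify $\delta_S^{(k)}$ and $\delta_N^{(k)}$ with the classical Jordan parts of $\delta^{(k)}$, use the Jordan--Chevalley fact that the semisimple part is a polynomial in $\delta^{(k)}$ to preserve the $\delta^{(k)}$-stable image of $\langle f \rangle$, and conclude from $\bigcap_k \left( \langle f \rangle + \what{\mm}^k \right) = \langle f \rangle$. The only cosmetic differences are that you handle $\delta_N$ by subtraction where the paper argues for both parts simultaneously, and you cite Krull's intersection theorem where the paper cites closedness of ideals in Noetherian local rings, which is the same fact.
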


\begin{proof}
Since $\delta = \delta_S + \delta_N$, we have that the induced map in $\what{\hol}/\what{\mm}^k$ is $\delta^{(k)} = \delta_S^{(k)} + \delta_N^{(k)}$. We know that $\delta_S^{(k)}$ is semisimple, $\delta_N^{(k)}$ is nilpotent (by the previous proposition) and $[\delta_S^{(k)}, \delta_N^{(k)}] = [\delta_S, \delta_N]^{(k)} = 0^{(k)}$, so $\delta_S^{(k)}$ and $\delta_N^{(k)}$ are precisely the semisimple and nilpotent part of $\delta^{(k)}$. Each quotient $\what{\hol}/\what{\mm}^k$ is a $\C$-vector space of finite dimension, so $\delta_S^{(k)}$ and $\delta_N^{(k)}$ can be written as polynomials in $\delta^{(k)}$ by the Jordan-Chevalley decomposition \cite[Cap. VII, Th. 1]{Bourbaki}. But since $\delta(f) = af$ with $a \in \what{\hol}$, $\delta^{(k)}([f]) = [a][f]$, and taking into account that $\delta^{(k)}$ is also a derivation, $(\delta^{(k)})^n([f]) \in \langle [f] \rangle$ for all $n$. So a polynomial in $\delta^{(k)}$ maps $[f]$ to an element in $\langle [f] \rangle$ and we conclude that $\delta_S^{(k)}([f]), \delta_N^{(k)}([f]) \in  \langle [f] \rangle$. This means that $\delta_S(f), \delta_N(f) \in \langle f \rangle + \what{\mm}^k$ for all $k \in \N$. But $\bigcap_{k \geq 1} \left( \langle f \rangle  + \what{\mm}^k\right) = \langle f \rangle$ since every ideal in a Noetherian local ring is closed for the $\what{\mm}$-adic topology \cite[Theorem 8.14]{Matsumura}, so $\delta_S(f), \delta_N(f) \in \langle f \rangle$ and $\delta_S, \delta_N$ are logarithmic.

\end{proof}

\begin{Cor}
\label{CorBasis}
If $f \in \hol$ is free and $\Der_f \subset \mm \Der$ (that is, every logarithmic derivation is singular), then there exists a basis of $\what{\Der}_f$ formed by semisimple and nilpotent derivations. Moreover, $f$ is a formal unit multiplied by the determinant of the Saito matrix with respect to that basis.
\end{Cor}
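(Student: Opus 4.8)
The plan is to start from a convergent basis of $\Der_f$, decompose each of its (necessarily singular) elements via Theorem \ref{ThChangeCoordDiag}, and then extract the desired basis of $\what{\Der}_f$ from the resulting semisimple and nilpotent pieces. Since $f$ is free, $\Der_f$ is a free $\hol$-module of rank $n$; fix a basis $\delta_1, \ldots, \delta_n$. As noted in the preliminaries, $\what{\Der}_f$ is the $\mm$-adic completion of $\Der_f$, so $\delta_1, \ldots, \delta_n$ is also a basis of $\what{\Der}_f$ as an $\what{\hol}$-module. By hypothesis $\Der_f \subset \mm \Der$, so each $\delta_i$ is singular and admits a Jordan decomposition $\delta_i = \delta_{i,S} + \delta_{i,N}$ with $\delta_{i,S}$ semisimple and $\delta_{i,N}$ nilpotent. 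By Proposition \ref{PropSNLog}, both $\delta_{i,S}$ and $\delta_{i,N}$ are again (singular) logarithmic derivations for $f$, hence they all lie in $\what{\Der}_f$.

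This produces $2n$ elements $\delta_{1,S}, \delta_{1,N}, \ldots, \delta_{n,S}, \delta_{n,N}$ of $\what{\Der}_f$, each of which is semisimple or nilpotent, and whose $\what{\hol}$-span contains all the $\delta_i = \delta_{i,S} + \delta_{i,N}$; therefore they generate $\what{\Der}_f$. The next step is to extract a basis of the required form, using that $\what{\hol}$ is local. Reducing modulo $\what{\mm}$, the images of these $2n$ elements span the $n$-dimensional $\C$-vector space $\what{\Der}_f / \what{\mm}\, \what{\Der}_f$, so one can select $n$ of them, say $\eta_1, \ldots, \eta_n$, whose images form a $\C$-basis. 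By Nakayama's lemma these $\eta_j$ generate $\what{\Der}_f$, and since a surjective $\what{\hol}$-linear endomorphism of a finitely generated module is an isomorphism, the $n$ generators $\eta_1, \ldots, \eta_n$ of the rank-$n$ free module $\what{\Der}_f$ form a basis. Each $\eta_j$ is one of the $\delta_{i,S}$ or $\delta_{i,N}$, hence semisimple or nilpotent, giving the desired basis.

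It remains to relate $f$ to the determinant of the Saito matrix $\what{A}$ of $\eta_1, \ldots, \eta_n$. Here I would invoke the convergent Saito criterion \cite[Theorem 1.8]{Saito}: since $f$ is free, writing $A_0$ for the Saito matrix of $\delta_1, \ldots, \delta_n$, there is a unit $u_0 \in \hol$ with $f = u_0 \det(A_0)$. Both $\{\delta_i\}$ and $\{\eta_j\}$ are $\what{\hol}$-bases of $\what{\Der}_f$, so there is a change-of-basis matrix $P \in GL_n(\what{\hol})$ with $\bar\eta = P \bar\delta$; consequently $\what{A} = P A_0$ and $\det(\what{A}) = \det(P)\det(A_0)$, where $\det(P)$ is a formal unit. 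Combining, $f = u_0 \det(A_0) = u_0 \det(P)^{-1} \det(\what{A})$, so $f$ equals the formal unit $\what{u} = u_0 \det(P)^{-1}$ times $\det(\what{A})$, as claimed. The only steps requiring genuine care are the extraction of a basis of the prescribed form (which is exactly where Nakayama and the local structure of $\what{\hol}$ enter) and checking that the change-of-basis determinant is a unit so that the convergent factorization of $f$ transfers to the new, formal basis; neither presents a real obstacle once Proposition \ref{PropSNLog} guarantees that all $2n$ pieces are logarithmic.
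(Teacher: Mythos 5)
Your proposal is correct and follows essentially the same route as the paper's own proof: decompose the elements of a basis into semisimple and nilpotent parts via Theorem \ref{ThChangeCoordDiag} and Proposition \ref{PropSNLog}, extract a basis from the resulting $2n$ generators by Nakayama's lemma, and transfer Saito's criterion through the change-of-basis matrix, whose determinant is a formal unit. The only cosmetic difference is that you start from a convergent basis viewed as a formal one (and spell out the Nakayama extraction in more detail), while the paper starts from an arbitrary $\what{\hol}$-basis of $\what{\Der}_f$; both are equally valid.
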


\begin{proof}
Since $f$ is free, $\Der_f \subset \mm \Der$ is free as an $\hol$-module and $\what{\Der}_f \subset \mm \what{\Der}$ is free as an $\what{\hol}$-module. Let $\{\delta_1, \ldots, \delta_n\}$ be an $\what{\hol}$-basis of $\what{\Der}_f$. By Proposition \ref{PropSNLog}, $\{\delta_{1,S}, \delta_{1, N},  \ldots, \delta_{n,S}, \delta_{n,N}\}$ is a generating set of $\what{\Der}_f$ and, by Nakayama's Lemma, we can extract a basis $\mathcal{C}$ of $\what{\Der}_f$ from this set. For the second part, if $A$ is the Saito matrix with respect to a convergent basis $\mathfrak{B}$ and $B$ is the change-of-basis matrix from $\mathcal{C}$ to $\mathfrak{B}$ (whose columns are the coordinates of the elements of $\mathcal{C}$ with respect to $\mathfrak{B}$), then $C = B^t A$ is the Saito matrix of $\mathcal{C}$. By Saito's Criterion, there exists a unit $u$ such that $f = u \det(A)$. Since $B$ is invertible, $\det(B)$ is a (formal) unit, so $\det(A) = \det(B)^{-1} \det(C)$ and $f = u \det(B)^{-1} \det(C)$, where $u \det(B)^{-1}$ is a formal unit.

\end{proof}

\begin{Rmk}
    Although we will only use the preceding result in the case of free divisors, the first assertion is also true for non-free divisors if we replace the word ``basis'' by ``generating set''.
\end{Rmk}

\section{A necessary condition for LCT in free divisors}
\label{SecTr0}

\noindent In \cite{1996}, the authors show, by constructing two different double complexes and comparing the four associated spectral sequences, that if LCT holds for a free divisor $D$ in $\C^n$, then, for each basis $\{\delta_1, \ldots, \delta_n\}$ of $\Der(-\log D)$, the morphism

\vspace{0,2 cm}

$$ \begin{array}{cccc}
     d_1: & \check{H}^{n-1} (V \setminus \{0\}, \hol_X) & \to & \check{H}^{n-1} (V \setminus \{0\}, \hol_X)^n \\
          & [g] & \mapsto & ([\delta_1(g)], \ldots, [\delta_n(g)])
     
\end{array} $$

\vspace{0,2 cm}

\noindent must be injective, where $\check{H}^{n-1} (V \setminus \{0\}, \hol_X)$ denotes \v{C}ech cohomology of a sufficiently small Stein neighbourhood $V$ of $0$ with values in $\hol_X$ with respect to the open cover $\{V_i = \{x_i \neq 0\}, i=1, \ldots, n\}$. It can be identified with the set of Laurent series with strictly negative powers in every variable. \\

\noindent We define \emph{the trace of a singular derivation $\delta$}, and denote it $\tr(\delta)$, as the trace of its linear part. Note that this definition is valid for both convergent and formal derivations. Moreover, the trace of a singular convergent derivation coincides with the trace of its corresponding derivation in the completion, since $\mm/\mm^2 \cong \what{\mm}/\what{\mm}^2$.

\vspace{0,3 cm}

\noindent It is easy to see \cite[Lemma 7.5]{GS} that, if $\delta$ is singular, then 

\vspace{0,2 cm}

$$ \delta \left( \left[ \frac{1}{x_1 \cdot \ldots \cdot x_n} \right] \right) = \left[ \frac{\tr(\delta)}{x_1 \cdot \ldots \cdot x_n} \right]. $$

\vspace{0,3 cm}

\noindent So if LCT holds for $D$ and it is not a product at $0$, then some of the $\delta_i$ must have non-zero trace, as otherwise the class of $\prod_{i=1}^n x_i^{-1}$ would be in the kernel of $d_1$. In this section, we will show that we can reach the same conclusion in an alternative way by using $\D$-module theory. \\

\noindent Let $\D = \hol[\partial_1, \ldots,\partial_n]$ be the ring of linear differential operators and consider the left ideal $\D \langle x_1, \ldots, x_n \rangle$. The following lemma describes the singular derivations lying in this ideal.

\vspace{0,5 cm}

\begin{Lema}
\label{LemaIdealTr0}
A singular derivation $\eta = \sum_{i=1}^n a_i \partial_i$ belongs to the left ideal $\D \langle x_1, \ldots, x_n \rangle$ if and only if $\tr(\eta) = 0$.
\end{Lema}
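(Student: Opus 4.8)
The plan is to detect membership in $\D\langle x_1, \ldots, x_n\rangle = \D\mm$ by means of a single $\C$-linear invariant of the quotient $\D/\D\mm$, and then to show that this invariant equals, up to sign, the trace. The key structural fact is that $\D$ is free as a \emph{right} $\hol$-module with basis $\{\partial^\alpha\}_{\alpha \in \N^n}$; equivalently, every operator has a unique \emph{right}-normal form $P = \sum_\alpha \partial^\alpha c_\alpha(x)$ with $c_\alpha \in \hol$. Consequently $\D/\D\mm = \D \otimes_\hol (\hol/\mm) \cong \C[\partial_1, \ldots, \partial_n]$, and the quotient map is the $\C$-linear functional $\epsilon \colon P \mapsto \sum_\alpha c_\alpha(0)\, \partial^\alpha$ that evaluates the right-normal-form coefficients at the origin; by construction $\ker \epsilon = \D\mm$. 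Thus the whole statement reduces to computing $\epsilon(\eta)$.

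First I would bring $\eta = \sum_i a_i \partial_i$ into right-normal form using the single commutation rule $a_i \partial_i = \partial_i a_i - \partial a_i / \partial x_i$, which gives $\eta = \sum_i \partial_i a_i - \sum_i \partial a_i / \partial x_i$. Here the terms $\partial_i a_i$ are already in right-normal form (each is $\partial^{e_i}$ with coefficient $a_i$), while $-\sum_i \partial a_i / \partial x_i$ is the order-zero part. Applying $\epsilon$, the $\partial_i$-coefficients contribute $a_i(0)$, which vanishes because $\eta$ is singular ($a_i \in \mm$), and the order-zero part contributes $-\sum_i (\partial a_i / \partial x_i)(0)$. Since $(\partial a_i / \partial x_i)(0)$ is exactly the coefficient of $x_i$ in the linear part of $a_i$, i.e. the $(i,i)$ entry of the matrix $A$ of the linear part $\eta_0 = \underline{x} A \overline{\partial}$, this sum is $\tr(\eta)$. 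Hence $\epsilon(\eta) = -\tr(\eta) \cdot 1$.

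Both implications then follow at once: $\eta \in \D\mm$ if and only if $\epsilon(\eta) = 0$ if and only if $\tr(\eta) = 0$, because $1$ is a nonzero element of $\D/\D\mm \cong \C[\partial_1, \ldots, \partial_n]$. I expect the only genuinely delicate point to be the clean identification $\ker \epsilon = \D\mm$; this rests on the right-$\hol$-module freeness of $\D$ (so that $- \otimes_\hol (\hol/\mm)$ simply sets the constant terms of the coefficients to zero) rather than on any ad hoc manipulation, and it is precisely what makes the ``only if'' direction work without having to rule out cancellations among the infinitely many ways of writing an element as $\sum_k P_k x_k$. The remaining care is purely bookkeeping of the commutator sign in $a_i \partial_i = \partial_i a_i - \partial a_i / \partial x_i$, which does not affect the equivalence.
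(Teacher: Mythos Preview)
Your argument is correct and follows essentially the same route as the paper: both use the commutation $a_i\partial_i=\partial_i a_i-\partial_i(a_i)$ together with $a_i\in\mm$ to reduce $\eta$ modulo $\D\mm$ to the constant $-\tr(\eta)$. Your version is slightly more explicit than the paper's in that you justify the ``only if'' direction by identifying $\D/\D\mm\cong\C[\partial_1,\ldots,\partial_n]$ via the right-$\hol$-module freeness of $\D$, whereas the paper tacitly uses that a nonzero constant does not lie in $\D\mm$.
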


\begin{proof}
Let $c_i = \dfrac{\partial a_i}{\partial x_i}(0)$, that is, the coefficient of the term $x_i$ in $a_i$, so that $\tr(\eta) = \sum_{i=1}^n c_i$. By the relation $[\partial_i, a_i] := \partial_i \cdot a_i - a_i \partial_i = \partial_i(a_i)$ and taking into account that $\partial_i \cdot a_i \in \D \langle x_1, \ldots, x_n \rangle$ since $a_i \in \mm$, we have that $\eta = -\sum_{i=1}^n c_i$ modulo $\D \langle x_1, \ldots, x_n \rangle$. Thus, $\eta$ will belong to $\D \langle x_1, \ldots, x_n \rangle$ if and only if $\tr(\eta) = \sum_{i=1}^n c_i = 0$.

\end{proof}

\noindent In \cite[Corollary 4.2]{Cald2005}, Calderón and Narváez proved that the following statements are equivalent:

\begin{itemize}
    \item[(1)] $D$ satisfies LCT.
    \item[(2)] The natural morphism $\rho: \D \stackrel{\mathbb{L}}{\otimes}_{\mathcal{V}_0} \hol(D) \to \hol(*D)$ induced by the inclusion $\hol(D) \subset \hol(* D)$ is an isomorphism in the derived category of bounded complexes of coherent left $\D$-modules, where $\mathcal{V}_0 = \D(-\log D)$ denotes the ring of logarithmic differential operators, $\hol(*D)$ is the $\hol$-module of meromorphic functions with poles along $D$ and $\hol(D)$ is the submodule of meromorphic functions with poles of order at most $1$.
\end{itemize}

\vspace{0,3 cm}

\noindent We are now able to prove the desired result.

\vspace{0,3 cm}

\begin{Th}
\label{Tr0}
If $D$ is a free divisor in $\C^n$ satisfying LCT and $f \in \hol$ is a reduced local equation of $D$ at $0$ such that it is not a product, then there exist derivations of $\Der_f$ with non-zero trace.
\end{Th}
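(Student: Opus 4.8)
The plan is to argue by contradiction, assuming that every logarithmic derivation of $f$ has zero trace, and to show that this forces LCT to fail through the Calderón--Narváez criterion. Since $f$ is not a product, every element of $\Der_f$ is singular, so the hypothesis together with Lemma \ref{LemaIdealTr0} places the whole of $\Der_f$ inside the left ideal $\D\langle x_1, \ldots, x_n\rangle$. As this is a left ideal and $\mathcal{V}_0 = \D(-\log D)$ is generated over $\hol$ by $\Der_f$, it follows that every logarithmic operator of positive order, as well as every coefficient in $\mm$, lands in $\D\langle x_1, \ldots, x_n\rangle$. Equivalently, on the module $E := \D/\D\langle x_1, \ldots, x_n\rangle$ (which is $\C[\partial_1,\ldots,\partial_n]$ as a vector space, the local cohomology at the origin whose canonical generator $\delta$ is the class of $1/(x_1\cdots x_n)$ appearing in \cite{1996}), the ring $\mathcal{V}_0$ acts on $\delta$ through an augmentation $\mathcal{V}_0 \to \C$ that kills $\mm$ and every element of $\Der_f$.

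First I would record the arithmetic of $E$: one has $x_j\,\delta = 0$, and for a singular derivation $\delta_i$ the generator satisfies $\delta_i(\delta) = \tr(\delta_i)\,\delta$, which is exactly the algebraic incarnation of the identity $\delta([1/(x_1\cdots x_n)]) = [\tr(\delta)/(x_1\cdots x_n)]$ recalled above. Thus the zero-trace assumption says precisely that each basis element annihilates $\delta$, so the Koszul-type map
$$ d_1 : E \longrightarrow E^n, \qquad e \longmapsto (\delta_1(e), \ldots, \delta_n(e)), $$
has the nonzero class $\delta$ in its kernel. This is the same map whose injectivity was extracted from the spectral-sequence comparison in \cite{1996}.

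Next I would feed the Calderón--Narváez equivalence into this picture. By \cite[Corollary 4.2]{Cald2005}, LCT is equivalent to $\rho$ being an isomorphism in the derived category, and by adjunction for the extension of scalars $\mathcal{V}_0 \hookrightarrow \D$ this yields a quasi-isomorphism
$$ \RHom_{\D}(\hol(*D), E) \cong \RHom_{\mathcal{V}_0}(\hol(D), E). $$
Computing the right-hand side with the logarithmic Spencer resolution of $\hol(D)$ (available because $D$ is free), its low-degree part is governed by the map $d_1$; the kernel class $\delta$ should then produce a cohomology class on the logarithmic side that the holonomic module $\hol(*D)$ cannot match, contradicting the isomorphism. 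Hence not every logarithmic derivation can have zero trace, which is the assertion --- now obtained intrinsically through $\D$-module duality rather than by comparing four spectral sequences.

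The main obstacle is the third step: one must pin down the precise homological bridge between ``$\rho$ is an isomorphism'' and ``$d_1$ is injective on $E$''. The delicate point is that the Spencer differentials for $\hol(D) = \mathcal{V}_0/I$ are twisted by the cofactors $a_i$ (where $\delta_i(f) = a_i f$), so the complex one literally obtains is $e \mapsto ((\delta_i + a_i)(e))_i$, and $(\delta_i + a_i)(\delta) = a_i(0)\,\delta$ need not vanish even when $\tr(\delta_i) = 0$. Reconciling this twisted complex with the untwisted trace computation --- equivalently, identifying the correct object (the constant-coefficient, untwisted logarithmic de Rham complex computing the cohomology of the complement via Grothendieck's theorem) whose first differential is exactly $d_1$, and verifying that the kernel class $\delta$ genuinely survives to an obstruction incompatible with $\rho$ --- is where the real work lies.
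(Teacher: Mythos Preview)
Your proposal is not a complete proof: you correctly set up the contradiction hypothesis, invoke Lemma \ref{LemaIdealTr0}, and identify $E = \D/\D\langle x_1,\ldots,x_n\rangle$ as the natural target, but you then explicitly acknowledge that you cannot close the argument because the Spencer resolution of $\hol(D)$ produces the twisted differentials $\delta_i + a_i$ rather than the untwisted $\delta_i$, and you do not see how to reconcile this with the trace computation. That is a real gap, not a technicality.

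The missing idea is to avoid $\hol(D)$ and the twisted Spencer complex altogether. Observe that Lemma \ref{LemaIdealTr0} gives you not merely a kernel class in $E$ but an inclusion of left ideals $\D\langle\delta_1,\ldots,\delta_n\rangle \subset \D\langle x_1,\ldots,x_n\rangle$, hence a \emph{surjection} of holonomic $\D$-modules
\[
\frac{\D}{\D\langle\delta_1,\ldots,\delta_n\rangle} \twoheadrightarrow E.
\]
The source is $\D \otimes_{\mathcal{V}_0} \hol$, with the \emph{untwisted} $\mathcal{V}_0$-module $\hol$ (where each $\delta_i$ acts by zero), not $\hol(D)$. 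The relevant input from \cite{Cald2005} is then Corollary 3.1.2, which states $\D \stackrel{\mathbb{L}}{\otimes}_{\mathcal{V}_0} \hol \cong \mathbb{D}\bigl(\D \stackrel{\mathbb{L}}{\otimes}_{\mathcal{V}_0} \hol(D)\bigr)$; combined with LCT this gives $\mathbb{D}\bigl(\D/\D\langle\delta_1,\ldots,\delta_n\rangle\bigr) \cong \hol(*D)$. Applying $\mathbb{D}$ to the surjection above and using that $E$ is self-dual turns it into an injection $E \hookrightarrow \hol(*D)$, which is impossible because $E$ has $\hol$-torsion while $\hol(*D)$ does not.

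In short: your $\RHom$-and-adjunction route runs into the twist precisely because you paired with $\hol(D)$; the paper instead pairs the untwisted $\hol$ with $\hol(D)$ through the Calder\'on--Narv\'aez duality, and then the contradiction is a one-line torsion argument rather than a spectral-sequence chase.
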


\begin{proof}

\vspace{0,3 cm}

\noindent Consider a basis $\mathfrak{B} = \{\delta_1, \ldots, \delta_n\}$ of $\Der_f \subset \mm \Der$ and suppose that every derivation of $\mathfrak{B}$ has zero trace. By Lemma \ref{LemaIdealTr0}, $\D \langle \delta_1, \ldots, \delta_n \rangle \subset \D \langle x_1, \ldots, x_n \rangle$ and we have a surjective morphism of holonomic $\D$-modules \\

$$ \frac{\D}{\D \langle \delta_1, \ldots, \delta_n \rangle} \longrightarrow \frac{\D}{\D \langle x_1, \ldots, x_n \rangle} \longrightarrow 0. $$ \\

\noindent By applying the duality functor $\mathbb{D}(-) = \R \HHom_{\D}(-, \D)[n]^{\leftfun}$ we get an injective morphism \\

$$ 0 \longrightarrow \mathbb{D} \left( \frac{\D}{\D\langle x_1, \ldots, x_n \rangle} \right) \longrightarrow \mathbb{D} \left(\frac{\D}{\D \langle \delta_1, \ldots, \delta_n \rangle} \right). $$ \\

\noindent It is a well-known fact that $\dfrac{\D}{\D\langle x_1, \ldots, x_n \rangle}$ is self-dual (that is, its dual is isomorphic to itself) and that $\D \stackrel{\mathbb{L}}{\otimes}_{\mathcal{V}_0} \hol \cong \mathbb{D} \left(\D \stackrel{\mathbb{L}}{\otimes}_{\mathcal{V}_0} \hol(D) \right)$ \cite[Corollary 3.1.2]{Cald2005}. Thus, if $D$ satisfies LCT, then $\D \stackrel{\mathbb{L}}{\otimes}_{\mathcal{V}_0} \hol \cong \mathbb{D} \left( \hol(*D) \right)$, meaning $\D \stackrel{\mathbb{L}}{\otimes}_{\mathcal{V}_0} \hol$ is concentrated in degree $0$ and

$$ \D \stackrel{\mathbb{L}}{\otimes}_{\mathcal{V}_0} \hol = \D \otimes_{\mathcal{V}_0} \hol \cong \D \otimes_{\mathcal{V}_0} \dfrac{\mathcal{V}_0}{\mathcal{V}_0 \langle \delta_1, \ldots, \delta_n \rangle} \cong \dfrac{\D}{\D\langle \delta_1, \ldots, \delta_n \rangle}. $$

\vspace{0,2 cm}

\noindent Applying $\mathbb{D}$ to both sides we get

\vspace{0,2 cm}

$$ \mathbb{D} \left(\frac{\D}{\D \langle \delta_1, \ldots, \delta_n \rangle} \right) \cong \mathbb{D} \left( \D \stackrel{\mathbb{L}}{\otimes}_{\mathcal{V}_0} \hol  \right) \cong \hol(*D). $$

\vspace{0,5 cm}

\noindent Finally, we have an injective morphism

\vspace{0,2 cm}

$$ 0 \longrightarrow \frac{\D}{\D \langle x_1, \ldots, x_n \rangle} \longrightarrow \hol(*D). $$

\vspace{0,3 cm}

\noindent But $\D / \D \langle x_1, \ldots, x_n \rangle$ cannot be a submodule of $\hol(*D)$ since the first one has $\hol$-torsion and the second one does not, so we conclude that at least one derivation of $\mathfrak{B}$ must have non-zero trace.

\end{proof}

\noindent A similar statement holds even if we remove the condition of not being a product:

\vspace{0,2 cm}

\begin{Cor}
    If $D$ is a free divisor in $\C^n$ satisfying LCT and $f \in \hol$ is a reduced local equation of $D$ at $0$, then there exist singular derivations of $\Der_f$ with non-zero trace.
\end{Cor}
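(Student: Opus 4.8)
The plan is to reduce the statement to Theorem \ref{Tr0} by induction on the dimension $n$, peeling off product factors with Lemma \ref{LemProd}. The key observation is that Theorem \ref{Tr0} already yields the conclusion whenever $f$ is not a product; and in that situation $\Der_f \subset \mm \Der$, so every logarithmic derivation is singular and the one produced by the theorem is automatically the singular derivation of non-zero trace that we seek. Thus it only remains to handle the case in which $f$ is a product.

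So, assuming $f$ is a product, I would apply Lemma \ref{LemProd} to obtain, after an analytic coordinate change fixing the origin, a factorization $f = ug$ with $u$ a unit and $g \in \C\{x_1, \ldots, x_{n-1}\}$ a reduced series in one fewer variable, defining a free divisor in $\C^{n-1}$ that again satisfies LCT. By the induction hypothesis there is a singular logarithmic derivation $\eta = \sum_{i=1}^{n-1} a_i \partial_i$ for $g$ in $\C^{n-1}$ with $\tr(\eta) \neq 0$. Viewing $\eta$ as a derivation of $\C\{x_1, \ldots, x_n\}$ that acts trivially on $x_n$, it remains singular and its linear part, hence its trace, is unchanged; and since $\langle f \rangle = \langle g \rangle$ we get $\eta(f) \in \langle f \rangle$, so $\eta \in \Der_f$. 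This is the desired derivation, once we transport it back through the coordinate change.

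Two routine points support the induction. First, the property ``$\Der_f$ contains a singular derivation of non-zero trace'' is invariant under biholomorphisms fixing the origin: such a map induces an $\hol$-linear isomorphism of $\Der$ carrying $\Der_f$ onto the logarithmic module of the transformed equation, preserving singularity and, because the linear part transforms by conjugation with the Jacobian at $0$, preserving the trace. Second, the base case $n = 1$ reduces to the single reduced germ $f = x_1$ (up to a unit), which is not a product, so Theorem \ref{Tr0} applies and provides $x_1 \partial_1$, of trace $1$. I do not anticipate any real obstacle: the corollary is a bookkeeping reduction to Theorem \ref{Tr0}, and the only thing demanding a little care is this invariance of the trace under the possibly non-linear coordinate change, together with the verification that the lifted $\eta$ stays logarithmic for $f$ rather than merely for $g$ --- both immediate from $\langle f \rangle = \langle g \rangle$ and from the trace depending only on the linear part.
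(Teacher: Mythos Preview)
Your proposal is correct and follows essentially the same route as the paper: reduce to Theorem \ref{Tr0} by stripping off product factors via Lemma \ref{LemProd}, then lift the resulting singular derivation of non-zero trace back to $\Der_f$ using $\langle f\rangle=\langle g\rangle$. The only cosmetic difference is that you package the reduction as a formal induction on $n$ (peeling one variable at a time), whereas the paper iterates Lemma \ref{LemProd} in one breath until it reaches a non-product $g$ in $m<n$ variables and then invokes Theorem \ref{Tr0} directly; your extra care about invariance of the trace under the coordinate change is a detail the paper leaves implicit.
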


\begin{proof}
    The case in which $f$ is not a product is already proved in Theorem \ref{Tr0}, so we may assume that $f$ is a product. Then, applying inductively Lemma \ref{LemProd}, we can write $f = ug$ for a certain unit $u$ and a convergent power series $g$ in $m < n$ variables such that it is not a product. Since LCT also holds for the divisor $D'$ defined by $g$ in $\C^m$, by Theorem \ref{Tr0}, we can find singular logarithmic derivations for $g$ with non-zero trace. These derivations are also logarithmic for $f$, so we are done.
    
\end{proof}

\section{LCT and Euler-homogenenity for free divisors}
\label{SecLCTEH}

\vspace{0,3 cm}

We begin this section with a technical lemma that is valid in an arbitrary number of variables, although we will only use it in the cases $n=2$ and $n=3$.

\vspace{0,3 cm}

\begin{Lema}
\label{DiagDer}
Let $\delta = \sum_{i=1}^n \lambda_i x_i \partial_i$ be a logarithmic diagonal derivation for a formal equation $f \in \what{\hol}$ such that $\delta(f) = cf$ for some $c \in \what{\hol}$. Then, there exist a formal unit $u$ and $g \in \what{\hol}$ such that $f=ug$ and $\delta(g)=c_0 g$, where $c_0$ is the value of $c$ at the origin.
\end{Lema}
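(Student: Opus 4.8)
The plan is to look for the unit in the exponential form $u = e^w$ with $w \in \what{\mm}$, which converts the whole statement into a single \emph{homological equation}. Indeed, if we set $g = f e^{-w}$, then
$\delta(g) = \left(\delta(f) - f\,\delta(w)\right)e^{-w} = \left(c - \delta(w)\right) g$,
so the two requirements $f = ug$ and $\delta(g) = c_0 g$ hold simultaneously exactly when $\delta(w) = c - c_0$. Thus everything reduces to solving $\delta(w) = c - c_0$ for some $w \in \what{\mm}$.

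Next I would exploit that $\delta$ is diagonal, hence acts on each monomial by $\delta(x^\alpha) = \langle \lambda, \alpha\rangle\, x^\alpha$, where $\langle \lambda, \alpha\rangle = \sum_i \lambda_i \alpha_i$ is the \emph{weight}. Grouping monomials by their weight gives a topological grading $h = \sum_\mu h^{(\mu)}$ of $\what{\hol}$ on which $\delta$ acts as multiplication by $\mu$ on the piece $h^{(\mu)}$. Consequently $\delta(w) = c - c_0$ can be solved weight by weight, by setting $w = \sum_{\mu \neq 0}\mu^{-1}(c-c_0)^{(\mu)}$, which lies in $\what{\mm}$, \emph{provided} the weight-zero (resonant) component of $c - c_0$ vanishes. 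As $c_0$ is a constant, and hence resonant, this is precisely the statement that the resonant part of $c$ is constant, i.e. $c^{(0)} = c_0$. This identity is the crux of the proof (and, conversely, it is clearly forced by the conclusion, since $c - c_0 = \delta(u)/u = \delta(\log u)$ must lie in the image of $\delta$).

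To prove $c^{(0)} = c_0$ I would pass to the finite-dimensional quotients $V_k = \what{\hol}/\what{\mm}^k$, where $\delta^{(k)}$ is semisimple (diagonal in the monomial basis) and $\delta^{(k)}[f] = [c][f]$. Introduce the formal flow $\phi_s$, the algebra automorphism with $\phi_s(x_i) = e^{s\lambda_i}x_i$, so that $\phi_s^{(k)}[f] = \sum_j e^{s\mu_j}[f]^{(\mu_j)}$ is a pure exponential polynomial in $s$ with constant coefficients. On the other hand, differentiating in $s$ and using $\delta^{(k)}[f] = [c][f]$ shows that $y(s) = \phi_s^{(k)}[f]$ solves the linear ODE $y' = \phi_s^{(k)}[c]\, y$ in the commutative algebra $V_k$, so $y(s) = \exp\!\big(\int_0^s \phi_\tau^{(k)}[c]\,d\tau\big)[f]$. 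Writing $[c]^{(0)} = c_0 + N$ with $N$ the resonant non-constant part (which is nilpotent in $V_k$), the exponent acquires the linear-in-$s$ term $s\,[c]^{(0)}$, so the solution equals $e^{sc_0}\exp(sN)\,E(s)[f]$ with $E(s)$ again an exponential polynomial with constant coefficients. Comparing this with the pure exponential polynomial $\sum_j e^{s\mu_j}[f]^{(\mu_j)}$ and invoking the linear independence of the functions $\{s^i e^{s\omega}\}$, the genuine polynomial factor $\exp(sN)$ has no counterpart on the other side, which forces $N^i[f] = 0$ for all $i \geq 1$; in particular $N[f] = 0$ in every $V_k$, that is $(c^{(0)} - c_0)\,f \in \what{\mm}^k$ for all $k$. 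Hence $(c^{(0)} - c_0)\,f = 0$, and since $\what{\hol}$ is a domain and $f \neq 0$, we obtain $c^{(0)} = c_0$.

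The main obstacle is exactly this last step, namely controlling the resonant part of the logarithmic eigenvalue $c$; the polynomial-versus-exponential growth dichotomy in the flow variable $s$ is what rules out higher-order resonant terms. Once $c^{(0)} = c_0$ is established, the explicit series $w = \sum_{\mu \neq 0}\mu^{-1}(c-c_0)^{(\mu)}$ lies in $\what{\mm}$, $u = e^w$ is a formal unit, and $g = f e^{-w}$ satisfies both $f = ug$ and $\delta(g) = c_0 g$, which completes the argument.
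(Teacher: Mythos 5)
Your proof is correct, and although its skeleton coincides with the paper's, the crucial step is carried out by a genuinely different argument. The shared part is the unit: the paper also takes $u = \exp\bigl(\sum_{\alpha\cdot\lambda\neq 0} c_\alpha x^\alpha/(\alpha\cdot\lambda)\bigr)$, which is exactly your $e^w$ solving the homological equation $\delta(w)=c-c_0$ weight by weight away from resonance. Where you diverge is the resonance identity $c^{(0)}=c_0$. You prove it \emph{before} constructing $u$, directly from $\delta(f)=cf$, by working in the finite-dimensional quotients $\what{\hol}/\what{\mm}^k$, integrating $\delta$ to the flow $\phi_s(x_i)=e^{s\lambda_i}x_i$, solving the resulting linear ODE, and playing the genuine polynomial factor $\exp(sN)$ produced by the nilpotent resonant part $N=[c^{(0)}-c_0]$ against the fact that $\phi_s^{(k)}[f]$ is a pure exponential polynomial; linear independence of $\{s^i e^{s\omega}\}$ forces $N[f]=0$ in every quotient, and Krull's intersection theorem plus the fact that $\what{\hol}$ is a domain give $c^{(0)}=c_0$. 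The paper never isolates this identity in advance: it passes to $g=u^{-1}f$, which satisfies $\delta(g)=c'g$ with $c'=c^{(0)}$ of weight zero, decomposes $g=\sum_\mu g_\mu$ into $\delta$-eigenvectors (the unique decomposition of Saito's lemma, i.e.\ the same weight grading you use), observes that $c'g_\mu$ is again an eigenvector of weight $\mu$, and concludes $\mu g_\mu=c'g_\mu$ by uniqueness; since some $g_\mu\neq 0$ and $\what{\hol}$ is a domain, $c'$ is the constant $\mu$, whose value must be $c_0$. The paper's route is shorter and stays entirely inside formal algebra; yours is heavier (ODEs, exponential polynomials, a passage to the limit over all quotients) but buys a clean standalone statement --- for diagonal $\delta$ and $f\neq 0$, the relation $\delta(f)=cf$ forces the resonant part of $c$ to be the constant $c_0$ --- established independently of any normalization of $f$, which makes the necessity of the solvability condition transparent. (Both arguments, yours included, implicitly assume $f\neq 0$; the case $f=0$ is trivial, as the paper notes parenthetically.)
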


\begin{proof}

Let us write $c = \sum_{{{\alpha}}} c_{{\alpha}} {x}^{{\alpha}}$, where $\alpha = (\alpha_1, \ldots, \alpha_n)$ runs in $\N^n$ and $x^\alpha = x_1^{\alpha_1}\cdot \ldots \cdot x_n^{\alpha_n}$. Note that if ${\alpha}$ verifies ${\alpha \cdot \lambda} := \sum_{j=1}^n \alpha_j \lambda_j \neq 0$ and we set

$$ a_{{\alpha}} = \exp \left( \frac{c_{{\alpha}} {x}^{{\alpha}}}{{\alpha \cdot \lambda}} \right),$$

\vspace{0,3 cm}

\noindent then $a_\alpha$ verifies $\delta(a_{{\alpha}}) = c_{{\alpha}} {x}^{{\alpha}} \cdot a_{{\alpha}}$. \\

\noindent Now, set

$$ u = \prod_{{\alpha \cdot \lambda} \neq 0} a_{{\alpha}} = \exp \left( \sum_{{\alpha \cdot \lambda} \neq 0} \frac{c_{{\alpha}} x^{{\alpha}}}{{\alpha \cdot \lambda}}  \right).  $$

\vspace{0,3 cm}

\noindent Note that $u$ is a well-defined formal power series because $b := \sum_{{\alpha \cdot \lambda} \neq 0} \frac{c_{{\alpha}} x^{{\alpha}}}{{\alpha \cdot \lambda}}$ vanishes at the origin, so we can compose the two series and the result is another formal power series that does not vanish at the origin. Thus, $u$ is a formal unit and

$$\delta(u) = \delta(e^b) = e^b \delta(b) = u \cdot \sum_{{\alpha \cdot \lambda} \neq 0} c_{{\alpha}} x^{{\alpha}}.$$

\noindent Then, we can write $f = u g$ with $g = u^{-1} f$ and

$$ \delta(g) = \delta(u^{-1}) f + u^{-1} \delta(f) = -u^{-1} \cdot \left( \sum_{{\alpha \cdot \lambda} \neq 0} c_{{\alpha}} x^{{\alpha}} \right) f + u^{-1} c f = \left( \sum_{{\alpha \cdot \lambda} = 0} c_{{\alpha}} x^{{\alpha}} \right) g. $$

\vspace{0,3 cm}

\noindent Let us see that $c' := \sum_{{\alpha \cdot \lambda} = 0} c_{{\alpha}} x^{{\alpha}}$ must be $c_0$ and we will have the result:

\vspace{0,3 cm}

\noindent Note that $\delta(c') = 0$, so if we decompose $g$ as a sum of eigenvectors of $\delta$ (what can be done in a unique way by \cite[Lemma 2.3]{Saito71}), $g = \sum_{\mu \in \C} g_{\mu}$ with $\delta(g_\mu) = \mu g_\mu$, we have

\vspace{0,2 cm}

$$ \delta(g) = \sum_{\mu \in \C} \mu g_\mu = c' g = \sum_{\mu \in \C} c' g_\mu, $$

\vspace{0,3 cm}

\noindent and since $\mu g_\mu$ and $c'g_\mu$ are both eigenvectors of $\delta$ for $\mu$, by the uniqueness of the decomposition, we must have $\mu g_\mu = c' g_\mu$ for every $\mu \in \C$. Since $g \neq 0$ (as otherwise $f$ is zero and there is nothing to prove), some $g_\mu \neq 0$. But then $c' = \mu$ and the only possibility for this is if $c' = \mu = c_0$, as the value of $c'$ at the origin is $c_0$.

\end{proof}

\begin{Rmk}
The same result is true for any semisimple derivation since there exists a formal coordinate change in which it is diagonal. Particularly, for any $\delta \in \mm \Der$, if $\delta_S(f) \in \mm f$, then there exists a formal unit $u$ and a formal power series $g$ such that $f=ug$ and $\delta_S(g) = 0$.
\end{Rmk}

\vspace{0,3 cm}

\begin{Prop}
\label{PropMN1}
Let $D$ be a free divisor in a complex analytic manifold of dimension $n$. Let $f$ be a reduced local equation of $D$ around $0$ such that it is not a product. If $D$ is not strongly Euler-homogeneous at $0$, then $f \in \mm^{n+1}$.
\end{Prop}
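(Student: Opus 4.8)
The plan is to exploit freeness to write $f$, up to a unit, as the determinant of a Saito matrix, and then to control its lowest-degree homogeneous part. Since $f$ is not a product, every logarithmic derivation is singular, so a Saito basis $\{\delta_1, \ldots, \delta_n\}$ of $\Der_f$ consists of singular derivations; by Saito's criterion $f = u \det C$ with $u$ a unit and $C = (\delta_i(x_j))$ the Saito matrix, whose entries lie in $\mm$. Hence $\det C \in \mm^n$ and $f \in \mm^n$. Writing $C_0$ for the matrix of linear parts of the entries of $C$, the degree-$n$ homogeneous component of $\det C$ equals $\det C_0$, so $f \in \mm^{n+1}$ is equivalent to $\det C_0 = 0$. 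Thus the whole statement reduces to proving $P := \det C_0 = 0$ under the non-Euler-homogeneity hypothesis, and I would argue by contradiction, assuming $P \neq 0$ (so that $P$ is homogeneous of degree $n$ and $f_n = u(0)P \neq 0$).

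The first key step is to translate ``not strongly Euler-homogeneous'' into a vanishing condition on the basis. Write $\delta_i(f) = a_i f$ with $a_i \in \hol$. If some $a_j$ were a unit (equivalently $a_j(0) \neq 0$), then $a_j^{-1}\delta_j \in \mm\Der$ would satisfy $(a_j^{-1}\delta_j)(f) = f$, exhibiting $f$ as strongly Euler-homogeneous; contrapositively, the hypothesis forces $a_i(0) = 0$ for every $i$. The second step extracts information at the linear level: comparing the degree-$n$ homogeneous parts of the identity $\delta_i(f) = a_i f$, and using $a_i(0) = 0$ together with $f_n = u(0)P$, only the term $\delta_{i,0}(f_n)$ survives on the left (every other contribution has degree $>n$), while the right-hand side contributes nothing in degree $n$. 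Hence $\delta_{i,0}(P) = 0$ for all $i$, where $\delta_{i,0} = \underline{x}A_i\overline{\partial}$ is the linear part of $\delta_i$.

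It remains to derive a contradiction from the fact that the $n$ linear vector fields $\delta_{1,0}, \ldots, \delta_{n,0}$, whose coefficient matrix is exactly $C_0$ with $\det C_0 = P \neq 0$, all annihilate the degree-$n$ form $P$. I would use the adjugate identity $\Adj(C_0)\,C_0 = P \cdot I$: multiplying the row vector $\underline{x}$ on the right shows that $\sum_i (\underline{x}\,\Adj(C_0))_i\,\delta_{i,0}$ has $j$-th coefficient $P x_j$, i.e.\ this combination equals $P \cdot E$, where $E = \sum_j x_j \partial_j$ is the Euler field. Applying this equality of polynomial-coefficient derivations to $P$ gives $0 = \sum_i (\underline{x}\,\Adj(C_0))_i\,\delta_{i,0}(P) = P \cdot E(P) = n P^2$, since $E(P) = nP$ by Euler's identity; as $n \neq 0$ this forces $P = 0$, the desired contradiction, and the proposition follows. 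The main obstacle is packaging this last step correctly: one must pass from ``each $\delta_{i,0}$ kills $P$'' to a statement about the Euler field, and the cleanest route is the adjugate identity above (equivalently, inverting $C_0$ over the field of meromorphic germs, so that $E$ becomes a combination of the $\delta_{i,0}$). The logical bridge in the first step---that Euler-homogeneity is detected by a single basis coefficient being a unit---is also essential and relies precisely on the non-product hypothesis that makes all $\delta_i$ singular.
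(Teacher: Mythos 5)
Your proof is correct, but it follows a genuinely different route from the paper's. The two arguments share their opening moves: a Saito basis $\{\delta_1,\ldots,\delta_n\}$ with $f$ equal (up to a unit) to $\det$ of the Saito matrix, and the translation of ``not strongly Euler-homogeneous'' into $a_i(0)=0$ for all $i$, which indeed requires the non-product hypothesis to guarantee that every $\delta_i$ is singular. From there the paper argues directly with order estimates on the full analytic Saito matrix $A$: from $\overline{\delta}(f)=A\overline{\partial}(f)=\overline{\alpha}f$ it deduces $\overline{\partial}(f)=\Adj(A)\overline{\alpha}$, and since the entries of $\Adj(A)$ lie in $\mm^{n-1}$ and the $\alpha_i$ lie in $\mm$, every partial derivative of $f$ lies in $\mm^n$, whence $f\in\mm^{n+1}$ --- no contradiction, no initial forms, no Euler field. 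You instead work at the graded level: you reduce the claim to the vanishing of $P=\det C_0$ (the determinant of the matrix of linear parts), show by comparing degree-$n$ homogeneous components of $\delta_i(f)=a_if$ that each linear part $\delta_{i,0}$ annihilates $P$, and then use the adjugate identity $\Adj(C_0)C_0=P\,I$ to express $P\cdot\sum_j x_j\partial_j$ as a combination of the $\delta_{i,0}$, so that Euler's identity yields $nP^2=0$ and hence $P=0$. Both mechanisms are sound (I checked your degree bookkeeping: since $\delta_i$ is singular and $f\in\mm^n$, the only degree-$n$ contribution on the left is indeed $\delta_{i,0}(f_n)$, and $a_if\in\mm^{n+1}$ on the right). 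What each buys: the paper's argument is shorter and entirely order-theoretic, needing only that adjugate entries of a matrix over $\mm$ lie in $\mm^{n-1}$; yours is longer but makes the obstruction visible at the level of linear parts --- the Saito matrix of linear parts of a non-product, non-Euler-homogeneous free germ must be degenerate, with $\det C_0$ killed by every $\delta_{i,0}$ --- which connects naturally to the Euler vector field and resonates with the linear-part analysis the paper carries out in its Appendix for $n=2$.
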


\begin{proof}
 Let $\delta_1, \ldots, \delta_n$ be a basis of $\Der_f$ and $\alpha_i \in \hol$ be such that $\delta_i(f) = \alpha_i f$ for $i=1, \ldots, n$. We may assume that $f = \det(A)$ with $A$ the Saito matrix, so $\overline{\alpha} f = \overline{\delta}(f) = A \overline{\partial}(f)$ and $\overline{\partial}(f) = \Adj(A) \overline{\alpha}$, where $\overline{\alpha} = (\alpha_1, \ldots, \alpha_n)^t$ and $\Adj(A)$ is the adjugate matrix of $A$ (the transpose of its cofactor matrix). Since $D$ is not strongly Euler-homogeneous at $0$, every $\alpha_i \in \mm$ (as otherwise $\delta_i/\alpha_i$ would be an Euler vector field for $f$). But $f$ is not a product, so $\Der_f \subset \mm \Der$ and all the entries of $A$ also lie in $\mm$. This implies that the entries of $\Adj(A)$ lie in $\mm^{n-1}$ and hence the coordinates of $\overline{\partial}(f) = \Adj(A) \overline{\alpha}$ lie in $\mm^n$. Thus, the partial derivatives of $f$ lie in $\mm^n$ and $f$ lies in $\mm^{n+1}$.
 
\end{proof}

\vspace{0,3 cm}

\noindent In what follows, $f$ will be a reduced local equation around $0$ of a plane curve $D \subset \C^2$ that it is not a product. \\

\begin{Prop}
\label{PropN1}
If $f \in \mm^3$ and $\delta$ is a singular logarithmic derivation such that $\delta(f) \in \mm f$, then $\delta$ is nilpotent.
\end{Prop}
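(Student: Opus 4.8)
The plan is to prove that the semisimple part $\delta_S$ of $\delta$ vanishes. Since $\delta = \delta_S + \delta_N$ is the Jordan decomposition of Theorem~\ref{ThChangeCoordDiag}, once $\delta_S = 0$ we get $\delta = \delta_N$, which is nilpotent, and we are done (equivalently, by uniqueness of the decomposition, $\delta$ is nilpotent exactly when $\delta_S = 0$).

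First I would check that the hypothesis passes to the two parts. By Proposition~\ref{PropSNLog} both $\delta_S$ and $\delta_N$ are singular logarithmic derivations for $f$, say $\delta_S(f) = a_S f$ and $\delta_N(f) = a_N f$. Writing $\delta(f) = a f$ with $a \in \what{\mm}$ (this is the assumption $\delta(f) \in \mm f$), Corollary~\ref{CorLogN} applied to the nilpotent derivation $\delta_N$ gives $a_N \in \what{\mm}$; hence $a_S = a - a_N \in \what{\mm}$, so that $\delta_S(f) \in \what{\mm} f$ as well.

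Now suppose, for contradiction, that $\delta_S \neq 0$. By the Remark following Lemma~\ref{DiagDer} there are a formal unit $u$ and $g \in \what{\hol}$ with $f = ug$ and $\delta_S(g) = 0$. Here $g = u^{-1} f$ is reduced (Proposition~\ref{PropConvFormalRed}) and, as multiplication by a unit preserves the $\what{\mm}$-adic order, $g \in \what{\mm}^3$. Passing to the formal coordinates in which $\delta_S$ is diagonal, $\delta_S = \lambda_1 x_1 \partial_1 + \lambda_2 x_2 \partial_2$ with $(\lambda_1, \lambda_2) \neq (0,0)$ — a change that preserves both reducedness and membership in $\what{\mm}^3$ — and expanding $g = \sum_\alpha g_\alpha x^\alpha$, the equation $\delta_S(g) = 0$ forces $g$ to be supported on the monomials $x^\alpha$ with $\alpha_1 \lambda_1 + \alpha_2 \lambda_2 = 0$.

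The heart of the proof, and the step I expect to be the most delicate, is to show that no reduced $g$ of order $\geq 3$ can have such a support. If exactly one weight vanishes, say $\lambda_2 = 0 \neq \lambda_1$, then $g = g(x_2)$ is reduced in one variable and so has order at most $1$. If both weights are nonzero, then either the line meets $\N^2$ only at the origin, forcing $g$ constant and hence $0$, which is absurd, or $\lambda_2/\lambda_1 = -p/q$ for coprime positive integers $p, q$, and then $g$ is a series in $x_1^p x_2^q$; factoring out its lowest power $m \geq 1$ writes $g$ as $x_1^{pm} x_2^{qm}$ times a unit, and reducedness forces $pm \leq 1$ and $qm \leq 1$, i.e. $p = q = m = 1$, so $g$ has order exactly $2$ (the normal-crossing case $x_1 x_2$). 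In every case $g$ has order at most $2$, contradicting $g \in \what{\mm}^3$; hence $\delta_S = 0$ and $\delta$ is nilpotent. I would emphasize that the hypothesis $f \in \mm^3$ enters only here, precisely to rule out the low-multiplicity exceptions — the smooth germ and the node $x_1 x_2$ — which are exactly the reduced plane curves admitting a nonzero semisimple singular logarithmic derivation; I do not foresee obstacles beyond carefully tracking reducedness through this monomial-support analysis.
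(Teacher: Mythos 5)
Your proof is correct and is essentially the paper's own argument: Jordan decomposition together with Proposition \ref{PropSNLog} and Corollary \ref{CorLogN} to get $\delta_S(f)\in\what{\mm}f$, the Remark after Lemma \ref{DiagDer} to replace $f$ by a reduced $g\in\what{\mm}^3$ with $\delta_S(g)=0$, and then the monomial-support analysis showing $g$ is a unit times a monomial of degree at most $2$, contradicting reducedness and $f\in\mm^3$ (your three-way case split on the $\lambda_i$ is just a more explicit rendering of the paper's factorization $g=v\, x^p y^q$). One caveat, irrelevant to the proof's validity: your closing parenthetical claim that smooth germs and nodes are \emph{exactly} the reduced plane curves admitting a nonzero semisimple singular logarithmic derivation is false as stated---any quasihomogeneous curve, e.g.\ the cusp $x^2-y^3$ with the derivation $3x\partial_x+2y\partial_y$, admits one; the correct classification needs the extra condition $\delta(f)\in\mm f$, which is what your argument actually uses.
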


\begin{proof}
Let $\delta = \delta_S + \delta_N$ be the decomposition of $\delta$ into a semisimple and a nilpotent part that commute and choose a (formal) coordinate system for which $\delta_S$ is diagonal. We know that $\delta_S$ and $\delta_N$ are also logarithmic for $f$, so there exist $a, b \in \what{\hol}$ such that $\delta_S(f) = af$ and $\delta_N(f) = bf$. Since $\delta(f) \in \mm f$ and $b \in \mm$ by Corollary \ref{CorLogN}, we have that $a \in \mm$. Now, by the preceding lemma, there exists a unit $u$ and a series $g$ such that $f=ug$ and $\delta_S(g) = 0$.

\vspace{0,3 cm}

\noindent Let us suppose that $\delta$ is not nilpotent and so $\delta_S := \lambda_1 x \partial_x + \lambda_2 y \partial_y \neq 0$. In particular, $(\lambda_1, \lambda_2) \neq (0,0)$, and since $\delta_S(g) = 0$, every monomial $x^{\alpha_1} y^{\alpha_2}$ appearing in $g$ must verify $\alpha_1 \lambda_1 + \alpha_2 \lambda_2 = 0$. Thus, the initial form of $g$ (note that $g \neq 0$ since $f \neq 0$) is, up to a constant, $x^p y^q$ for some $p,q \in \N$ and every other monomial of $g$ will be of the form $x^{kp} y^{kq}$ for some $k \in \N$. This means we can write $g = v \cdot x^p y^q$ for a unit $v$. But then, $f = uv x^p y^q \in \mm^3$, so $p+q \geq 3$ and $f$ is not reduced as a formal power series. This implies by Proposition \ref{PropConvFormalRed} that $f$ is not reduced as a convergent series and we get a contradiction, so we must have $\delta_S = 0$ and $\delta = \delta_N$ is nilpotent.

\end{proof}

\vspace{0,3 cm}

\noindent Every plane curve is a free divisor. Now, the proof of the conjecture for $n=2$ is an almost straightforward consequence of the preceding proposition.

\vspace{0,3 cm}

\begin{Cor}
\label{CorConjN2}
Plane curves that satisfy LCT are strongly Euler-homogeneous.
\end{Cor}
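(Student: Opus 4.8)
The plan is to argue by contradiction, combining the necessary condition for LCT from Section \ref{SecTr0} with the structural dichotomy of Proposition \ref{PropN1}. First I would dispose of the product case: if $f$ is a product at $0$, then by Lemma \ref{LemProd} a coordinate change writes $f = ug$ with $g$ a reduced equation in one variable, which is (up to a unit) just $x_1$, hence smooth and trivially strongly Euler-homogeneous; since that lemma transfers strong Euler-homogeneity between $f$ and $g$, we may henceforth assume $f$ is not a product. In particular $\Der_f \subset \mm \Der$, so every logarithmic derivation for $f$ is singular.

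Assume, for contradiction, that $D$ is not strongly Euler-homogeneous at $0$. Since a plane curve is a free divisor and $f$ is not a product, Proposition \ref{PropMN1} applied with $n=2$ yields $f \in \mm^3$, which is precisely the hypothesis required to invoke Proposition \ref{PropN1}.

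Next I would extract a derivation of non-zero trace. Because $f$ satisfies LCT and is not a product, Theorem \ref{Tr0} provides a $\delta \in \Der_f$ with $\tr(\delta) \neq 0$; as noted above, $\delta$ is singular. The failure of strong Euler-homogeneity then forces $\delta(f) \in \mm f$: writing $\delta(f) = \alpha f$ with $\alpha \in \hol$, if $\alpha$ were a unit then $\alpha^{-1}\delta \in \mm\Der$ would be an Euler field for $f$, contradicting our assumption, so necessarily $\alpha \in \mm$.

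Finally, apply Proposition \ref{PropN1} to this singular logarithmic $\delta$ satisfying $\delta(f) \in \mm f$ with $f \in \mm^3$: it must be nilpotent. But by Corollary \ref{EquivNilp} a nilpotent derivation has nilpotent linear part, whose trace vanishes, so $\tr(\delta) = 0$, contradicting $\tr(\delta) \neq 0$. Hence $D$ must be strongly Euler-homogeneous. I expect the only genuine subtlety to be the bookkeeping that makes all hypotheses simultaneously available, namely confirming that the non-product reduction keeps us within the scope of both Theorem \ref{Tr0} and Proposition \ref{PropN1}, and that the derivation furnished by the trace argument is exactly one to which Proposition \ref{PropN1} applies; once this is in place, the conclusion is an immediate clash between having non-zero trace and being nilpotent.
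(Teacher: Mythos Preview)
Your proposal is correct and follows essentially the same approach as the paper: reduce to the non-product case via Lemma \ref{LemProd}, use Proposition \ref{PropMN1} to land in $\mm^3$, and then combine Proposition \ref{PropN1} with Corollary \ref{EquivNilp} and Theorem \ref{Tr0} to obtain a contradiction. The only cosmetic difference is the order of invocation: the paper first shows that \emph{every} logarithmic derivation is nilpotent (hence has zero trace) and then appeals to Theorem \ref{Tr0}, whereas you first extract a single derivation of non-zero trace from Theorem \ref{Tr0} and then contradict it via Proposition \ref{PropN1}; the ingredients and the logic are the same.
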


\begin{proof}
Since these are local properties, it is enough to show that if $f$ is not strongly Euler-homogeneous at $0$, then LCT does not hold for $f$ at $0$. We may assume that $f$ is not a product, as otherwise the problem would be reduced by Lemma \ref{LemProd} to dimension 1, where the equation becomes $u \cdot x_1 = 0$ with $u$ a unit and $x_1 \partial_1$ is a strong Euler vector field for $x_1$. So $\Der_f \subseteq \mm \Der$ and $f = \det(A) \in \mm^2$ with $A$ the Saito matrix for a suitable basis of $\Der_f$. If $f \notin \mm^3$, then we know that it is strongly Euler-homogeneous at $0$ by Proposition \ref{PropMN1} and there is nothing to prove, so we may assume $f \in \mm^3$.

\vspace{0,3 cm}

\noindent Let us suppose that $f$ is not strongly Euler-homogeneous at $0$, so $\delta(f) \in \mm f$ for all $\delta \in \Der_f$. This means, by the preceding proposition, that every derivation on $\Der_f$ is nilpotent. In particular, by Corollary \ref{EquivNilp}, the matrices of their linear parts are nilpotent. But then all their traces are $0$ and, by Theorem \ref{Tr0}, LCT cannot hold.

\end{proof}

\vspace{0,3 cm}

\noindent Unfortunately, Proposition \ref{PropN1} cannot be generalized to higher dimension. Indeed, it is not even true for $n \geq 3$ that if $f \in \mm^{n+1}$ and $\delta(f) \in \mm f$, then $\tr(\delta) = 0$, as the following example shows: \\

\begin{Ex}
Let $n \geq 3$ and $f = (x_1^3-x_2^3) x_3 \ldots x_n = 0 \in \mm^{n+1}$. $\Der_f$ is a free module of which a basis is $\{ \delta_1, \ldots, \delta_n \}$, where 

$$\begin{array}{l}
     \delta_1 = x_1 \partial_1 + x_2 \partial_2, \\[0,2 cm]
     \delta_2 = x_2^2 \partial_1 + x_1^2 \partial_2, \\[0,2 cm]
     \delta_i = x_i \partial_i, \hspace{0,2 cm} 3 \leq i \leq n.
\end{array}$$

\vspace{0,3 cm}

\noindent Let $\eta_i = 3 \delta_i - \delta_1$, $3 \leq i \leq n$. We have that $\eta_i$ is diagonal (and therefore, not nilpotent) and $\eta_i(f) = 0$ but $\tr(\eta_i) = 1$ for all $3 \leq i \leq n$.

\end{Ex}

\vspace{0,5 cm}

\noindent Nevertheless, Lemma \ref{DiagDer} allows us to simplify the proof of Conjecture \ref{ConjLCT} in dimension 3 given in \cite{GS}, without using the \emph{formal structure theorem} \cite[Theorem 5.4]{GS}. But first, we need to introduce the concept of a \emph{Koszul free divisor} \cite[Definition 1.6]{CN02}.

\vspace{0,3 cm}

\begin{Def}
Let $D \subset X$ be a divisor in a complex analytic manifold of dimension $n$. We say that $D$ is \emph{Koszul free} at $x \in X$ if it is free at $x$ and there exists a basis $\{\delta_1, \ldots, \delta_n\}$ of $\Der_{X,x}(-\log D)$ such that the sequence of symbols $\{\sigma(\delta_1), \ldots, \sigma(\delta_n)\}$ is regular in the graded ring with respect to the order filtration of the ring of linear differential operators at $x$. We say $D$ is Koszul free if it is Koszul free at each $x \in X$.
\end{Def}

\vspace{0,3 cm}

\noindent In \cite[Corollary 1.5]{GS}, the authors prove that Koszul free divisors satisfy Conjecture \ref{ConjLCT}, so we can restrict ourselves to the case in which $D$ is not Koszul free.

\vspace{0,3cm}

\begin{Th}
Let $D$ be a free divisor in a complex analytic manifold of dimension 3. If LCT holds for $D$, then it is strongly Euler-homogeneous.
\end{Th}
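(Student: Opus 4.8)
The plan is to mimic the overall strategy of \cite{GS} for $n=3$, but to replace the formal structure theorem \cite[Theorem 5.4]{GS} with Lemma \ref{DiagDer} and the logarithmic Jordan decomposition of Section \ref{SecSN}. Since being strongly Euler-homogeneous and satisfying LCT are local, I work at a point with reduced equation $f \in \hol = \C\{x,y,z\}$, and first dispose of the easy reductions. If $f$ is a product, then by Lemma \ref{LemProd} the problem descends to a divisor in $\C^2$, where it is settled by Corollary \ref{CorConjN2}; so I may assume $f$ is not a product, i.e.\ $\Der_f \subset \mm \Der$. If $D$ is Koszul free, the conclusion holds by \cite[Corollary 1.5]{GS}; so I may further assume $D$ is not Koszul free. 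Arguing by contradiction, I suppose LCT holds but $D$ is not strongly Euler-homogeneous.

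Under these hypotheses I would extract the rigid structure forced on $f$. By Proposition \ref{PropMN1}, $f \in \mm^4$, and since $D$ is not strongly Euler-homogeneous every logarithmic derivation $\delta$ satisfies $\delta(f) \in \mm f$ (otherwise $\delta$ divided by the unit $\delta(f)/f$ would be an Euler field). Using Corollary \ref{CorBasis} I pass to a basis of $\what{\Der}_f$ consisting of semisimple and nilpotent derivations. By Theorem \ref{Tr0}, LCT forces some logarithmic derivation to have non-zero trace; since nilpotent derivations have zero trace (Corollary \ref{EquivNilp}) and the trace of a derivation coincides with that of its semisimple part, the semisimple part $\delta_S$ of such a derivation (logarithmic by Proposition \ref{PropSNLog}) is a non-zero diagonal derivation $\delta_S = \lambda_1 x \partial_x + \lambda_2 y \partial_y + \lambda_3 z \partial_z$ in suitable formal coordinates, with $\lambda_1 + \lambda_2 + \lambda_3 \neq 0$. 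Writing $\delta_S(f) = af$, Lemma \ref{DiagDer} gives $f = ug$ with $u$ a formal unit and $\delta_S(g) = a_0 g$, where $a_0$ is the value of $a$ at the origin. If $a_0 \neq 0$, then $(\delta_S/a_0)(g) = g$ exhibits $g$, and hence $f$, as strongly Euler-homogeneous, a contradiction; therefore $a_0 = 0$ and $\delta_S(g) = 0$.

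The heart of the argument is then the analysis of the equation $\delta_S(g) = 0$. Since $\delta_S$ is diagonal, every monomial $x^{\alpha_1} y^{\alpha_2} z^{\alpha_3}$ appearing in $g$ must lie in the hyperplane $\lambda_1 \alpha_1 + \lambda_2 \alpha_2 + \lambda_3 \alpha_3 = 0$, while $\lambda_1 + \lambda_2 + \lambda_3 \neq 0$. Reproducing the relevant part of the discussion in \cite{GS}, I would run through the possible configurations of $(\lambda_1, \lambda_2, \lambda_3)$ according to the lattice of admissible exponents $\{\alpha \in \N^3 : \lambda \cdot \alpha = 0\}$: when this lattice is trivial, $g$ is a unit, contradicting $g \in \mm$; when its monomial support lies in a coordinate subspace or is concentrated on a single ray, the series $g$ depends on fewer variables or is a series in one monomial, forcing $f = ug$ to be either a product or non-reduced, both impossible (reducedness being preserved formally by Proposition \ref{PropConvFormalRed}); the remaining configurations are to be excluded using the non-Koszul-free hypothesis together with the constraints imposed by the other (semisimple and nilpotent) members of the basis. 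In each case one contradicts either the assumption that $f$ is not a product or the reducedness of $f$, completing the proof.

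I expect this last paragraph to be the main obstacle. The delicate point is that a single semisimple derivation $\delta_S$ does not by itself pin down $g$: one must use the full basis of $\what{\Der}_f$, keep track of how the semisimple and nilpotent parts of its three members interact (in particular, the semisimple parts need not be simultaneously diagonalizable), and feed in the non-Koszul-free hypothesis to rule out the surviving eigenvalue configurations. This bookkeeping is precisely where \cite{GS} invoke the formal structure theorem, and the task is to verify that Lemma \ref{DiagDer} and Proposition \ref{PropSNLog} suffice to carry it out directly in dimension three.
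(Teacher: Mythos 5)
Your setup coincides with the paper's: the same reductions (not a product, not Koszul free), the same tools (Corollary \ref{CorBasis}, Proposition \ref{PropSNLog}, Lemma \ref{DiagDer}, Theorem \ref{Tr0}), and the same target of analyzing a diagonal $\delta_S$ with $\delta_S(g)=0$. But the part you defer as ``the main obstacle'' is precisely the heart of the proof, and it is missing. The paper settles it by a three-case analysis on how many of the eigenvalues $\lambda_1,\lambda_2,\lambda_3$ are non-zero, and the two hard cases require specific arguments that your sketch does not supply and partly mischaracterizes. \emph{Case of exactly two non-zero eigenvalues} (say $\lambda_1=0$, $\lambda_2,\lambda_3\neq 0$): here the conclusion is not that $f$ is a product or non-reduced; it is that $\tr(\delta_S)=0$. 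The mechanism is that $\delta_S$ must occur as a row of a Saito matrix $A$ with $f$ equal to a formal unit times $\det(A)$; expanding $\det(A)$ along that row, whose entries are $(0,\lambda_2 x_2,\lambda_3 x_3)$, gives $g\in\langle x_2,x_3\rangle$, which kills the $a_0(x_1)$ term in $g=\sum_\mu a_\mu(x_1)x_2^{\mu p}x_3^{\mu q}$, lets one extract $x_2^p x_3^q$ as a common factor, and then reducedness forces $p=q=1$, i.e.\ $\lambda_2+\lambda_3=0$. Your $\delta_S$ is the semisimple part of an arbitrary non-zero-trace derivation, not a member of the basis $\mathfrak{B}$, so this divisibility is not available to you as stated (it can be repaired: $\tr(\delta_S)\neq 0$ implies $\delta_S\notin\mm\what{\Der}_f$, so $\delta_S$ extends to a basis of $\what{\Der}_f$ whose Saito matrix again has determinant $f$ up to a formal unit --- but this step must be made). \emph{Case of three non-zero eigenvalues}: your appeal to ``the non-Koszul-free hypothesis'' needs an actual argument, which in the paper is: truncate the formal coordinate change at high order to obtain a \emph{convergent} $\eta\in\Der_f$ whose linear part is $\sum_i\lambda_i y_i\partial_i$; since all $\lambda_i\neq 0$, $\eta$ vanishes only at the origin near $0$, so its symbol cuts a hypersurface over each point of $\Sing D\setminus\{0\}$, forcing the logarithmic characteristic variety $L_X(\log D)$ to have dimension $3$, whence $D$ is Koszul free by \cite[Corollary 1.9]{CN02} --- contradiction. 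Without these two arguments (and the easy one-eigenvalue case giving a product), the proof does not close; your description of the surviving configurations as all contradicting ``not a product'' or reducedness is not accurate, since the two-eigenvalue case is consistent with both and is excluded only through the trace computation.

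A minor structural remark: the paper does not argue from a single non-zero-trace derivation. It shows that \emph{every} element of the semisimple/nilpotent basis $\mathfrak{B}$ of $\what{\Der}_f$ has zero trace, deduces (since $\tr(h\delta)=h(0)\tr(\delta)$ for singular $\delta$) that every derivation in $\what{\Der}_f$, in particular every convergent logarithmic derivation, has zero trace, and then invokes Theorem \ref{Tr0} to conclude LCT fails. Your contrapositive framing (start from a non-zero-trace derivation supplied by Theorem \ref{Tr0} and contradict it) is workable and slightly leaner, but only once the case analysis above is actually carried out for your $\delta_S$; also note that the paper never needs Proposition \ref{PropMN1} in dimension $3$, so your appeal to $f\in\mm^4$ is superfluous.
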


\begin{proof}
Let us suppose that $D$ is not strongly Euler-homogeneous. As before, we may assume that $D$ is not a product (in such a case, by Lemma \ref{LemProd}, we can reduce the problem to dimension 2, where we know the result is true) and that it is not strongly Euler-homogeneous at $0$, so if $f$ is a reduced local equation for $D$ at $0$, then we have $\Der_f \subseteq \mm \Der$ and $\delta(f) \in \mm f$ for all $\delta \in \Der_f$. We may also assume that $D$ is not Koszul free at $0$, as we know the result is true for Koszul free divisors. Let us see that every $\delta \in \what{\Der}_f$ must verify $\tr(\delta) = 0$ (by linearity, it is enough to show it for a basis). In particular, each $\delta$ in $\Der_f$ will also have zero trace, and by Theorem \ref{Tr0} this will imply that LCT cannot hold for $D$. \\

\noindent By Corollary \ref{CorBasis}, we can take a basis $\mathfrak{B}$ of $\what{\Der}_f$ in which each derivation is either semisimple or nilpotent. Let $A$ be the Saito matrix of this basis, so $f = u \cdot \det(A)$ for some unit $u \in \what{\hol}$. As we discussed previously, every nilpotent derivation has zero trace, so it remains to prove that the same is true for the semisimple derivations of $\mathfrak{B}$. \\

\noindent Let $\delta$ be a semisimple derivation of $\mathfrak{B}$. By Theorem \ref{ThChangeCoordDiag}, there exists a formal system of coordinates in which $\delta$ is diagonal (say $\delta = \sum_{i=1}^3 \lambda_i x_i \partial_i$). Since $\delta(f) \in \mm f$, by Lemma \ref{DiagDer}, there exist a unit $v \in \what{\hol}$ and $g \in \what{\hol}$ such that $f = vg$ and $\delta(g) = 0$ (note that $g$ is also reduced by Proposition \ref{PropConvFormalRed}) Now, we can distinguish three cases (which are the subcases of Case II in the proof of Theorem 1.6 in \cite{GS}):

\begin{itemize}
    \item Only one of the $\lambda_i$ is not zero: after renaming the variables we may assume $\lambda_1 = \lambda_2 = 0, \lambda_3 \neq 0$. In this case, $g$ is annihilated by $\partial_3$, which is not in $\mm \what{\Der}$, so $D$ is (formally, and then, convergently by Proposition \ref{PropFormProd}) a product and we get a contradiction.
    
    \item Exactly two of the $\lambda_i$ are not zero: as before, we may assume $\lambda_1 = 0, \lambda_2, \lambda_3 \neq 0$. In this case, $g$ is of the form $\sum_{\lambda_2 j + \lambda_3 k = 0} a_{jk}(x_1) x_2^j x_3^k$ or, equivalently,  $\sum_{\mu \in \N} a_{\mu}(x_1) x_2^{\mu p} x_3^{\mu q}$ where $p, q \geq 1$ are such that $\lambda_2 p + \lambda_3 q = 0$ (if $p=q=0$, $g$ would be a product). Since $g = v^{-1} u \det(A)$ and $\lambda_1 = 0$, $g \in \langle x_2, x_3 \rangle$. This implies that $a_0(x_1) = 0$ and we can extract $x_2^p x_3^q$ as common factor. But $g$ is reduced, so it must be $p=q=1$ and $\lambda_2 + \lambda_3 = 0$, meaning $\tr(\delta) = 0$.
    
    \item None of the $\lambda_i$ is zero: if we truncate the coordinate change that makes $\delta$ diagonal at a sufficiently high order, we get a convergent $\eta \in \Der_f$ such that its linear part is precisely of the form $\sum_{i=1}^3 \lambda_i y_i \partial_i$. In this case, $L_X(\log D)$, the logarithmic characteristic subvariety, which is the variety defined by the symbols of the logarithmic derivations in the cotangent bundle, has dimension $n=3$. Indeed, as the singular locus of a (singular) free divisor is of the largest possible dimension, we have that $\Sing D$ is of dimension $1$. But since $\lambda_i \neq 0$ for all $i$, there exists a sufficiently small neighbourhood $V \subset D$ in which $\eta$ only vanishes at the origin, so $\sigma(\eta)$ determines a hypersurface (which is of dimension $2$) over each singular point of $V \setminus\{0\}$. Thus, the maximal possible dimension of $L_X(\log D)$ is $1+2 = 3$, which is also the minimal one \cite[3.17]{Saito}. But then $D$ is Koszul free by \cite[Corollary 1.9]{CN02} and we get a contradiction.
\end{itemize}

\noindent We conclude that every derivation of $\mathfrak{B}$ must have zero trace, and so LCT cannot hold.

\end{proof}

\vspace{0,5 cm}

\begin{center}
    {\bf APPENDIX}
\end{center}

\appendix

\section{Clarifying the argument in the proof of the conjecture for $n=2$ in \cite{2002}}
\label{Sec2002}

\noindent In \cite[Theorem 3.3]{2002}, the authors state without proof that if $D$ is a plane curve and $f \in \mm^3$ is a reduced local equation for $D$ around $0$, then there exists a basis $\{\delta_1, \delta_2\}$ of $\Der_f$ such that $\delta_1$ has no linear part. Although this is true, the proof needs some attention.

\vspace{0,3 cm}

\noindent What can be said a priori is the following (later we will prove that, in fact, condition $(2)$ cannot hold).

\vspace{0,3 cm}

\begin{Prop}
\label{Prop1}
Let $D$ be a plane curve and let $f \in \mm^3$ be a reduced local equation for $D$ around $0$. Then, after a possible change of variables, there exists a basis of $\Der_f$ (as an $\hol$-module) such that one of the following two conditions holds:

\begin{itemize}
    \item[(1)] At least one of its elements has zero linear part (and therefore it is nilpotent).
    \item[(2)] The linear parts of its elements are $x\partial_x$ and $(ax+y)\partial_x$ for some $a \in \C$.
\end{itemize}

\end{Prop}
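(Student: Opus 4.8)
The plan is to analyze the possible linear parts of a basis of $\Der_f$ when $f \in \mm^3$. Since $f$ is not a product we have $\Der_f \subseteq \mm \Der$, so every logarithmic derivation is singular and has a well-defined linear part, giving a $\C$-linear map $\Der_f \to \mathfrak{gl}_2(\C)$ sending $\delta \mapsto A_\delta$, the matrix of $\delta_0$. The key structural input is that $\Der_f$ is a Lie algebra (the logarithmic derivations are closed under bracket), so the image of this map is a Lie subalgebra $\mathfrak{h} \subseteq \mathfrak{gl}_2(\C)$ stable under taking linear parts of brackets. I would first reduce the problem to understanding which Lie subalgebras of $\mathfrak{gl}_2(\C)$ can arise as such images, keeping in mind that we are free to perform linear coordinate changes (conjugating $\mathfrak{h}$ by $GL_2(\C)$) and $\hol$-linear combinations of basis elements.

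\begin{proof}[Proof sketch]
First I would observe that, since $f \in \mm^3$, by Proposition \ref{PropMN1} (applied in the contrapositive) the divisor may fail to be strongly Euler-homogeneous, and in any case the relevant fact is that no logarithmic derivation can have linear part equal to the identity (an Euler field): if some $\delta \in \Der_f$ had $\delta_0 = \underline{x} I \overline{\partial}$ then, combining with the structure of $\Der_f$, one produces a strong Euler vector field, forcing $f \in \mm^2 \setminus \mm^3$, a contradiction. Thus the image $\mathfrak{h}$ of the linear-part map contains no scalar matrix $\lambda I$ with $\lambda \neq 0$. I would then study the possible dimensions of $\mathfrak{h}$: if $\dim_\C \mathfrak{h} \leq 1$, then at least one of the two basis elements can be chosen (after an $\hol$-linear combination) to have zero linear part, giving case $(1)$. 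The interesting situation is $\dim_\C \mathfrak{h} = 2$.

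Next, for $\dim_\C \mathfrak{h} = 2$, I would use the classification of $2$-dimensional Lie subalgebras of $\mathfrak{gl}_2(\C)$ together with the constraint that $\mathfrak{h}$ contains no nonzero scalar. The generic possibility is that $\mathfrak{h}$ is spanned by two commuting (or bracket-compatible) matrices that can be simultaneously brought, via conjugation by $GL_2(\C)$ (a linear coordinate change), into upper-triangular form. A case analysis on the eigenvalue structure of a generator yields that, after the coordinate change, the linear parts can be normalized so that one generator has matrix with a single nonzero eigenvalue---giving the normal form $x \partial_x$---while the second, after subtracting an appropriate multiple and rescaling, has linear part $(ax+y)\partial_x$ for some $a \in \C$. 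This is precisely case $(2)$. Whenever the eigenvalue configuration instead allows arranging a nonzero-trace nilpotent-style combination to collapse to zero linear part, we land back in case $(1)$.

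The main obstacle I anticipate is the simultaneous-normalization step: ensuring that a single linear coordinate change (conjugation by one $B \in GL_2(\C)$) can bring the whole $2$-dimensional space $\mathfrak{h}$ into the required triangular normal form, rather than normalizing each generator separately with incompatible changes. This forces a careful bracket computation to show $\mathfrak{h}$ is triangularizable (equivalently, that its generators share a common eigenvector, which follows from Lie's theorem since $\mathfrak{h}$ is solvable, being $2$-dimensional), after which the freedom to take $\hol$-linear combinations of the two basis derivations is used to fix the specific shapes $x\partial_x$ and $(ax+y)\partial_x$. I would close by noting that the dichotomy between $(1)$ and $(2)$ is genuinely exhaustive given these constraints, deferring to the subsequent analysis in the appendix the argument that $(2)$ can ultimately be excluded.
\end{proof}
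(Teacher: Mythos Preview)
Your overall framework---mapping a basis of $\Der_f$ to the $2$-dimensional space $\mathfrak{h}$ of linear parts and analysing $\mathfrak{h}$ up to conjugation---is reasonable, but the proof has a genuine gap: you never use the Saito determinant condition, which is the only real constraint available on $\mathfrak{h}$. Concretely, since $f \in \mm^3$ and $f$ equals (a unit times) the determinant of the Saito matrix, the \emph{quadratic} part of that determinant must vanish. This forces the three relations $a_1c_2=a_2c_1$, $b_1d_2=b_2d_1$, $a_1d_2+b_1c_2=a_2d_1+b_2c_1$ on the linear parts. The paper's proof is entirely driven by these relations. Your substitute constraint---that $\mathfrak{h}$ contains no nonzero scalar because a scalar linear part would ``force $f\in\mm^2\setminus\mm^3$''---is simply false: for instance $f=x^3+y^3$ lies in $\mm^3$ and has the Euler field $x\partial_x+y\partial_y$ as a logarithmic derivation with linear part $I$. (What is true is that if $I\in\mathfrak{h}$ then the Saito relations force the other generator to be scalar too, so $\dim\mathfrak{h}\le 1$; but this again rests on the Saito condition, not on your argument.)

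Even granting Lie's theorem, simultaneous triangularisation only yields linear parts of the shape $\begin{pmatrix} * & * \\ 0 & * \end{pmatrix}$, whereas case~(2) demands the much stronger shape $\begin{pmatrix} * & * \\ 0 & 0 \end{pmatrix}$ (both derivations pointing in the $\partial_x$-direction). Getting from the first to the second is exactly where the Saito relations enter: in triangular form they read $a_1d_2=a_2d_1$ and $b_1d_2=b_2d_1$, which force either $d_1=d_2=0$ (giving case~(2)) or $M_2\in\C M_1$ (giving $\dim\mathfrak{h}\le 1$, hence case~(1)). So your Lie-theoretic route can be completed, but only after inserting the Saito constraint at the crucial step; as written, the argument does not go through.
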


\begin{proof}
Every plane curve is a free divisor, so by Saito's Criterion, we know that there exists a basis $\{\delta_1, \delta_2\}$ of $\Der_f$ such that

$$ \left( \begin{array}{c} \delta_1 \\ \delta_2 
\end{array} \right) = A  \left( \begin{array}{c} \partial_x \\ \partial_y \end{array} \right)$$

\vspace{0,2 cm}

\noindent with $f = \det(A)$. 

\vspace{0,2 cm}

\noindent Note that, since $f$ is reduced and belongs to $\mm^3$, it cannot be a product by Lemma \ref{LemProd}. Hence, all the entries of $A$ lie in $\mm$ and the determinant of the linear parts of the entries of $A$ must vanish. If $\delta_{i0} = (a_i x + b_i y) \partial_x + (c_i x + d_i y) \partial_y$ for $i = 1,2$, then we have

$$ \left| \begin{array}{cc} a_1 & c_1 \\ a_2 & c_2 \end{array} \right| = \left| \begin{array}{cc} b_1 & d_1 \\ b_2 & d_2 \end{array} \right| = \left| \begin{array}{cc} a_1 & d_1 \\ a_2 & d_2 \end{array} \right| + \left| \begin{array}{cc} b_1 & c_1 \\ b_2 & c_2 \end{array} \right| = 0. $$

\vspace{0,2 cm}

\noindent Let us consider the derivations $\delta_1' = a_2 \delta_1 - a_1 \delta_2$ and $\delta_2' = -b_2 \delta_1 + b_1 \delta_2$, whose linear parts are:
    
    $$ \begin{array}{c}
         \delta'_{10} = (a_2 b_1 - a_1 b_2) y \partial_x + (a_2 d_1 - a_1 d_2) y \partial_y, \\
         \delta'_{20} = (a_2 b_1 - a_1 b_2) x \partial_x + (a_2 d_1 - a_1 d_2) x \partial_y,
    \end{array} $$
    
\noindent where we have used that $b_1 c_2 - b_2 c_1 = a_2 d_1 - a_1 d_2$. There are two possibilities:
    
\begin{itemize}

\item If $a_2 b_1 - a_1 b_2 \neq 0$, then $\{\delta'_1, \delta'_2\}$ is another basis of $\Der_f$. Dividing by $a_2 b_1 - a_1 b_2$ and setting $a = (a_2 d_1 - a_1 d_2)/(a_2 b_1 - a_1 b_2)$ we have derivations whose linear parts are $x(\partial_x + a \partial_y)$ and $y(\partial_x + a \partial_y)$. By performing the linear coordinate change $x' = x$ and $y' = y-ax$ we have $\partial_x = \partial_{x'}-a\partial_{y'}$ and $\partial_{y} = \partial_{y'}$, so we get $(2)$ in the variables $x', y'$.
        
\item If $a_2 b_1 - a_1 b_2 = 0$, we claim that also $a_1 d_2 - a_2 d_1 = 0$. It is clear if $a_1 = a_2 = 0$, and otherwise $(b_1, b_2) = \lambda (a_1, a_2)$ for some $\lambda \in \C$. From $0 = b_1 d_2 - b_2 d_1 = \lambda (a_1 d_2 - a_2 d_1)$ we get $a_1 d_2 - a_2 d_1 = 0$ or $\lambda = 0$, in which case $(b_1, b_2) = (0,0)$ and we get $a_1 d_2 - a_2 d_1 = 0$ from the last equality. Hence, $\delta'_{10}$ and $\delta'_{20}$ are zero. If at least one of $a_1, a_2, b_1, b_2$ is not zero, then $\delta'_1$ or $\delta'_2$ can be part of a basis of $\Der_f$ and we have $(1)$.

Otherwise, we have $\delta_{10} = (c_1 x + d_1 y) \partial_y$ and $\delta_{20} = (c_2 x + d_2 y) \partial_y$, so we can consider $\delta'_1 = c_2 \delta_1 - c_1 \delta_2$ and $\delta'_2 = -d_2 \delta_1 + d_1 \delta_2$, whose linear parts are:

    $$ \begin{array}{c}
         \delta'_{10} = (c_2 d_1 - c_1 d_2) y \partial_y, \\
         \delta'_{20} = (c_2 d_1 - c_1 d_2) x \partial_y.
    \end{array} $$

Again, if $c_2 d_1 - c_1 d_2 \neq 0$, we have that $\{\delta'_1, \delta'_2\}$ is a basis of $\Der_f$. Dividing by $c_2 d_1 - c_1 d_2$ and exchanging $x$ and $y$, we have a basis with $x \partial_x$ and $y \partial_x$ as linear parts. This is the second case with $a = 0$. 

Finally, if $c_2 d_1 - c_1 d_2 = 0$, then, either $c_1 = d_1 = 0$ or $(c_2, d_2) = \eta (c_1, d_1)$ for some $\eta \in \C$. In the first case we have $\delta_{10} = 0$ and in the second one, we have that $\{\delta'_1 = \delta_1, \delta'_2 = \eta \delta_1 - \delta_2 \}$ is a basis of $\Der_f$ with $\delta'_{20} = 0$.
\end{itemize}
\end{proof}

\vspace{0,3 cm}

\noindent The following lemma gives a necessary condition for a plane curve to be strongly Euler-homogeneous in terms of the linear parts of a basis of its logarithmic derivations.

\vspace{0,3 cm}

\begin{Lema}
Let $D$ be a plane curve and let $f \in \mm^3$ be a reduced local equation for $D$ around $0$ such that it is not a product. Let $\{\delta_1, \delta_2\}$ be a basis of $\Der_f$ with $\delta_{i0} = (a_i x + b_i y) \partial_x + (c_i x + d_i y) \partial_y$ for $i = 1,2$ and let $B = \left( \begin{array}{cccc} a_1 & b_1 & c_1 & d_1 \\ a_2 & b_2 & c_2 & d_2 \end{array} \right)$. If $D$ is strongly Euler-homogeneous, then $\rank(B) = 1$.
\end{Lema}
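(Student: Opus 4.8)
The plan is to exhibit a strong Euler derivation $\delta$ with $\delta(f)=f$, read off the linear datum it imposes on $B$, and then eliminate the only genuine obstruction using the normal forms of Proposition \ref{Prop1}. Since $\delta\in\mm\Der$ is logarithmic, I write it in the given basis, $\delta=h_1\delta_1+h_2\delta_2$, and take linear parts: $\delta_0=h_1(0)\,\delta_{10}+h_2(0)\,\delta_{20}$, so $\delta_0$ lies in the row span of $B$. Comparing the lowest-degree homogeneous components of $\delta(f)=f$ shows $\delta_0(f_m)=f_m$, where $f_m$ is the initial form of $f$ (of degree $m\geq 3$); in particular $\delta_0\neq 0$ and hence $\rank(B)\geq 1$. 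As $\rank(B)$ is unchanged under a change of basis of $\Der_f$ (which multiplies $B$ on the left by an invertible scalar matrix) and under a linear change of coordinates, I may apply Proposition \ref{Prop1} and assume the basis is in one of its two normal forms.

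In case $(1)$ one basis element has vanishing linear part, so one row of $B$ is zero and $\rank(B)\leq 1$; combined with $\rank(B)\geq 1$ this gives $\rank(B)=1$. The whole difficulty is thus to rule out case $(2)$ under strong Euler-homogeneity. After the elementary basis change replacing the second field by $\delta_2-a\delta_1$, case $(2)$ becomes the normal form with linear parts $x\partial_x$ and $y\partial_x$; then every element of their span, and in particular $\delta_0=(px+qy)\partial_x$, has no $\partial_y$-component, so its matrix $\left(\begin{smallmatrix} p & q\\ 0 & 0\end{smallmatrix}\right)$ has eigenvalues $p$ and $0$, with $(p,q)\neq(0,0)$ because $\delta_0\neq 0$.

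If $p=0$ then $\delta_0=qy\partial_x$ is nilpotent, hence by Corollary \ref{EquivNilp} it acts nilpotently on degree-$m$ forms and cannot fix the nonzero $f_m$, contradicting $\delta_0(f_m)=f_m$. So I may assume $p\neq 0$, in which case $\delta_0$ is already semisimple and equals its own semisimple part, so $\delta_{S,0}=\delta_0$ has eigenvalues $p,0$. By Proposition \ref{PropSNLog} the semisimple part $\delta_S$ is logarithmic, and by Theorem \ref{ThChangeCoordDiag} there are formal coordinates $(\tilde u,\tilde w)$ in which $\delta_S=p\,\tilde u\,\partial_{\tilde u}$. Writing $\delta_S(f)=af$ and $\delta_N(f)=bf$, Corollary \ref{CorLogN} gives $b\in\mm$, and $a+b=1$ forces $a(0)=1$; Lemma \ref{DiagDer} then yields a formal unit $v$ and $g$ with $f=vg$ and $\delta_S(g)=a(0)\,g=g$.

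It remains to solve $p\,\tilde u\,\partial_{\tilde u}(g)=g$: expanding $g$ in monomials forces the exponent of $\tilde u$ to be the constant $i_0=1/p$, which is a positive integer because $g\neq 0$, so $g=\tilde u^{\,i_0}\phi(\tilde w)$ for a one-variable series $\phi$. Since $f=vg$ is reduced as a formal series (Proposition \ref{PropConvFormalRed}), this forces $i_0=1$ and $\phi=(\text{unit})\,\tilde w^{\,r}$ with $r\leq 1$. If $r=0$ then $f$ is a formal, hence convergent, product by Proposition \ref{PropFormProd}, against the hypothesis; if $r=1$ then $f=(\text{unit})\,\tilde u\,\tilde w\in\mm^2\setminus\mm^3$, against $f\in\mm^3$. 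Hence case $(2)$ cannot occur and $\rank(B)=1$ in all cases. The main obstacle is precisely this case $(2)$ analysis: the linear identity $\delta_0(f_m)=f_m$ by itself is too weak, since a monomial initial form is perfectly compatible with a reduced non-product germ, so one is forced to pass to the formally diagonalizable $\delta_S$ and use Lemma \ref{DiagDer} to control $f$ to all orders.
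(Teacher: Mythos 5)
Your proof is correct, but it takes a genuinely different route from the paper's. The paper argues directly from the Saito matrix: writing $\delta_i(f)=\alpha_i f$ and $f=\det(A)$, one gets $\overline{\partial}(f)=\Adj(A)\,\overline{\alpha}$; since $f\in\mm^3$, both partial derivatives lie in $\mm^2$, so the degree-one terms of $\Adj(A)\,\overline{\alpha}$ must vanish, which is exactly a homogeneous linear system on the constant terms $(\alpha_{10},\alpha_{20})$ whose coefficient matrix has the same rank as $B$; strong Euler-homogeneity makes some $\alpha_i$ a unit, so the system has a nontrivial solution, forcing $\rank(B)\leq 1$, while $B\neq 0$ because a logarithmic derivation with no linear part strictly increases order and can never satisfy $\delta(f)=f$. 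That argument is elementary, a few lines long, and does not use Proposition \ref{Prop1} at all. You instead reduce to the normal forms of Proposition \ref{Prop1} (legitimately: your rank-invariance observations under base change of $\Der_f$ and linear coordinate changes are right, and there is no circularity, since the paper's Corollary ruling out case $(2)$ is deduced \emph{from} this Lemma, whereas you eliminate case $(2)$ independently), dispose of case $(1)$ trivially, and kill case $(2)$ with the full machinery of Sections 3 and 5: the Jordan decomposition (Theorem \ref{ThChangeCoordDiag}), Proposition \ref{PropSNLog}, Corollary \ref{CorLogN}, Lemma \ref{DiagDer}, and the formal-versus-convergent comparisons for reducedness and products. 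Each step of that elimination checks out: $\delta_0(f_m)=f_m$ rules out nilpotent $\delta_0$ (Corollary \ref{EquivNilp} plus Proposition \ref{PropNilpQuot}); $p\neq 0$ makes $\delta_0$ semisimple, so $\delta_S$ diagonalizes as $p\,\tilde{u}\,\partial_{\tilde{u}}$ and $a(0)=1$; and Lemma \ref{DiagDer} together with reducedness forces $f=(\text{unit})\,\tilde{u}\,\tilde{w}^{\,r}$ with $r\leq 1$, contradicting ``not a product'' when $r=0$ and $f\in\mm^3$ when $r=1$. What your route buys is conceptual coherence with the paper's main theme --- it exhibits the Lemma as another instance of the semisimple/nilpotent analysis, and in passing shows that normal form $(2)$ is incompatible with strong Euler-homogeneity without invoking the Lemma; what it costs is dependence on Proposition \ref{Prop1} and on the formal structure theory, where the paper's adjugate computation is self-contained and essentially linear algebra.
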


\begin{proof}
Let $\alpha_i \in \hol$ be such that $\delta_i(f) = \alpha_i f$. We may assume that $f = \det(A)$ with $A$ the Saito matrix ($\overline{\delta} = A \overline{\partial}$), so $\overline{\alpha} f = A \overline{\partial}(f)$ and $\overline{\partial}(f) = \Adj(A) \overline{\alpha}$. In particular, we have

$$ \begin{array}{r}
   \partial_x(f) = a_{22} \alpha_1 - a_{12} \alpha_2  \\
   \partial_y(f) = -a_{21} \alpha_1 + a_{11} \alpha_2 
\end{array} \hspace{0,2 cm} \text{ if } 
A = \left(\begin{array}{cc}
    a_{11} & a_{12} \\
    a_{21} & a_{22}
\end{array} \right). $$

\vspace{0,2 cm}

\noindent Since $f \in \mm^3$, $\partial_x(f)$ and $\partial_y(f)$ are in $\mm^2$, so if we call $\alpha_{i0}$ the constant term of $\alpha_i$:

$$ \left\lbrace \begin{array}{r}
     (c_2 x + d_2 y) \alpha_{10} - (c_1 x + d_1 y) \alpha_{20} = 0  \\
     -(a_2 x + b_2 y) \alpha_{10} + (a_1 x + b_1 y) \alpha_{20} = 0 
\end{array} \right. $$

\vspace{0,2 cm}

\noindent From this, we get the homogeneous linear system of equations

$$ \left( \begin{array}{cc}
    -a_2 & a_1 \\
    -b_2 & b_1 \\
     -c_2 & c_1 \\
     -d_2 & d_1 \\
\end{array} \right) 
\left( \begin{array}{c}
     \alpha_{10} \\
     \alpha_{20} 
     \end{array} \right) = \mathbf{0}. $$

\vspace{0,2 cm}

\noindent Note that the rank of the matrix of this system is the rank of $B$. Since $D$ is strongly Euler-homogeneous, some $\alpha_i$ must be a unit, and so this system must have a non-trivial solution, what can only happen if $\rank(B) \leq 1$. But $B$ cannot be the zero matrix, as otherwise every $\delta \in \Der_f$, not having linear term, would increase the order of $f$ and none of them could be an Euler vector field. We conclude that $\rank(B) = 1$.

\end{proof}

\vspace{0,3 cm}

\begin{Cor}
    In the situation of Proposition \ref{Prop1}, $(2)$ cannot hold (and so the statement given in the proof of Theorem 3.3 in \cite{2002} is true).
\end{Cor}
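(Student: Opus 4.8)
The plan is to assume that case $(2)$ of Proposition \ref{Prop1} holds and derive a contradiction; this shows that one can always arrange case $(1)$, which is exactly the statement needed from \cite{2002}. So suppose $\{\delta_1,\delta_2\}$ is a basis of $\Der_f$ with $\delta_{10}=x\partial_x$ and $\delta_{20}=(ax+y)\partial_x$. Since $f\in\mm^3$ is reduced it cannot be a product (Lemma \ref{LemProd}), so $\Der_f\subset\mm\Der$ and in particular $\delta_1$ is a singular logarithmic derivation. The crucial observation is that the matrix of $\delta_{10}=x\partial_x$ is $\mathrm{diag}(1,0)$, whose eigenvalues are $1$ and $0$; it is the presence of the eigenvalue $0$ (rather than a genuine Euler direction) that will clash with reducedness.

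First I would pass to the semisimple part of $\delta_1$. By Proposition \ref{PropSNLog}, $\delta_{1,S}$ is again a (singular) logarithmic derivation for $f$, and since $\delta_{10}$ is already semisimple, the corollary identifying $\delta_{S,0}$ with $\delta_{0,S}$ gives that the linear part of $\delta_{1,S}$ is exactly $x\partial_x$. By Theorem \ref{ThChangeCoordDiag} there is a formal coordinate system $x',y'$ in which $\delta_{1,S}$ is diagonal of the form $\sum\lambda_i x_i\partial_i$ with the $\lambda_i$ the eigenvalues of its linear part; since these are $1$ and $0$, after reordering we get $\delta_{1,S}=x'\partial_{x'}$.

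Next I would apply Lemma \ref{DiagDer} to the diagonal logarithmic derivation $\delta_{1,S}$. Writing $\delta_{1,S}(f)=a_1 f$, the lemma produces a formal unit $u$ and $g\in\what{\hol}$ with $f=ug$ and $\delta_{1,S}(g)=c_0 g$, where $c_0=a_1(0)$. Solving the eigenvector equation $x'\partial_{x'}(g)=c_0 g$ coefficient by coefficient forces $c_0\in\N$ and $g=x'^{c_0}h(y')$ for some power series $h$ in the single variable $y'$. Since $f$ is reduced it is reduced as a formal series (Proposition \ref{PropConvFormalRed}), and therefore so is its associate $g=u^{-1}f$. Writing $h(y')=y'^{m}\cdot(\text{unit})$, we have $g=x'^{c_0}y'^{m}\cdot(\text{unit})$, which is squarefree only if $c_0\le 1$ and $m\le 1$; but then $\mathrm{ord}(g)=c_0+m\le 2$, contradicting $\mathrm{ord}(g)=\mathrm{ord}(f)\ge 3$. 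Hence case $(2)$ cannot occur, so the basis of Proposition \ref{Prop1} always falls in case $(1)$ and contains a derivation with no linear part.

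The step I expect to be the real heart of the argument is the passage $\delta_1\rightsquigarrow\delta_{1,S}=x'\partial_{x'}$ together with the resulting normal form $g=x'^{c_0}h(y')$: everything hinges on the eigenvalue $0$ collapsing $g$ to a series that is essentially univariate, so that reducedness caps its order at $2$. The only point requiring care is that this order-and-reducedness comparison must be carried out formally rather than convergently, which is precisely what Proposition \ref{PropConvFormalRed} (and the fact that $f$ and $g$ are associates) permits. I would also note in passing that the alternative route through the preceding lemma on $\rank(B)$ does not suffice on its own, since case $(2)$ is in fact \emph{not} strongly Euler-homogeneous, so the contradiction has to come from the reducedness obstruction rather than from the Euler-homogeneity criterion.
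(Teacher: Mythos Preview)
Your argument is correct and takes a different route from the paper's. The paper uses the preceding $\rank(B)$ lemma: in case $(2)$ one computes $\rank(B)=2$, so $D$ is not strongly Euler-homogeneous and hence $\delta_i(f)\in\mm f$; the linear parts $x\partial_x$ and $(ax+y)\partial_x$ then annihilate the initial form $f_k$, forcing $f_k=y^k$, and an argument borrowed from \cite{2002} upgrades this to $f=y^k$ in suitable coordinates, contradicting reducedness since $k\ge 3$. Your route bypasses both the $\rank(B)$ lemma and the external appeal to \cite{2002}: you apply the Jordan machinery (Proposition~\ref{PropSNLog}, Theorem~\ref{ThChangeCoordDiag}, Lemma~\ref{DiagDer}) directly to $\delta_1$, diagonalize $\delta_{1,S}$ to $x'\partial_{x'}$ using that its eigenvalues are $1$ and $0$, and read off $g=x'^{c_0}h(y')$, after which the order/reducedness clash is immediate. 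This is essentially the mechanism behind Proposition~\ref{PropN1} (and in fact slightly stronger, since you never need $\delta_1(f)\in\mm f$), and it is more self-contained; the paper's route stays in convergent coordinates longer but leans on the normal-form result from \cite{2002}. Your closing remark is on target: the $\rank(B)$ lemma alone only yields non-Euler-homogeneity, not a contradiction, and the paper too ultimately reaches the contradiction through reducedness---just by a different path.
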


\begin{proof}
    Let us suppose that $(2)$ holds and let $\{\delta_1, \delta_2\}$ be the basis of $\Der_f$ as in Proposition \ref{Prop1}. The matrix $B$ of the preceding lemma is in this case
    
    $$ B = \left( \begin{array}{cccc} 1 & 0 & 0 & 0 \\ a & 1 & 0 & 0 \end{array} \right), $$

    \vspace{0,2 cm}

    \noindent which has rank $2$ for every $a \in \C$, so $D$ is not strongly Euler-homogeneous. This means that $\delta_i(f) \in \mm f$ for $i=1,2$. Then, the linear parts of the basis, as they do not increase the order, must annihilate the initial form of $f$. So if $f_{k} \in \mm^k \setminus \mm^{k+1}$ ($k \geq 3$) is the initial form of $f$, then $\partial_x(f_{k}) = 0$ and $f_{k} = y^k$. But in the proof of Theorem 3.3 in \cite{2002}, the authors show that, if $f_{k} = x^p y^q$ and the linear part of a logarithmic derivation that is not an Euler vector field is $qx \partial_x -py \partial_y$, then $f = x^p y^q$ after a coordinate change. We have a particular case of this fact with $p=0$ and $q=k$, so $f$ can be written as $y^k$ in some coordinate system. But then $f$ is not reduced, contradicting the initial supposition.
    
\end{proof}

\vspace{0,1 cm}

\begin{Rmk}
Apart from this, there is no need of considering that one of the logarithmic derivations has no linear part in the proof of Theorem 3.3 in \cite{2002}. What can be easily deduced from the arguments given there is that every $\delta \in \Der_f$ such that $\delta(f) \in \mm f$ must be nilpotent, as we proved in a completely different way in Proposition \ref{PropN1}. Then, if $D$ is not strongly Euler-homogeneous, every logarithmic derivation is nilpotent and, by the argument given in Corollary \ref{CorConjN2}, LCT cannot hold.
\end{Rmk}

\nocite{*}
\bibliographystyle{plainurl}
\bibliography{biblio}

\end{document}